\newcommand{\sign}{\operatorname{sign}} 
\newcommand{\rg}{\operatorname{rg}} 
\renewcommand{\Re}{\operatorname{Re}}
\newcommand{\dom}{\operatorname{dom}} 
\newcommand{\hsign}{\widehat{\sign}} 
\newcommand{\euS}{\EuScript{S}} 
\newcommand{\euT}{\EuScript{T}}
\newcommand{\defeq}{\mathrel{\vcentcolon=}} 
\newcommand{\mmid}{\,\middle|\,}
\newcommand{\ee}{\mathrm{e}} 
\newcommand{\dd}{\,\mathrm{d}} 
\newcommand{\Ell}{\mathrm{L}} 
\newcommand{\Ce}{\mathrm{C}} 
\newcommand{\Me}{\mathrm{M}} 
\newcommand{\Cb}{\Ce_\mathrm{b}} 
\newcommand{\Id}{\mathrm{Id}} 
\newcommand{\C}{\mathbb{C}} 
\newcommand{\N}{\mathbb{N}} 
\newcommand{\R}{\mathbb{R}}
\newcommand{\LLL}{\mathscr{L}} 
\newcommand{\calB}{\mathscr{B}} 
\newcommand{\pow}{\mathscr{P}}
\theoremstyle{plain}
\newtheorem{theorem}{Theorem} 
\newtheorem{lemma}[theorem]{Lemma} 
\newtheorem{proposition}[theorem]{Proposition}
\theoremstyle{definition} 
\newtheorem{remark}[theorem]{Remark} 
\newtheorem{example}[theorem]{Example} 
\newtheorem{definition}[theorem]{Definition}
\begin{document}

\title[Koopman semigroups on $\Cb(X)$]{Towards a Koopman theory for dynamical systems on completely regular spaces}

\subjclass[2010]{37B02, 46A70, 47D06.}

\author{Bálint Farkas}

\thanks{The authors thank Wolfgang Arendt, Markus Haase, Viktoria Kühner, Rainer Nagel, and Abdelaziz Rhandi for ideas and advice.} 
\address{Fakult\"at f\"ur Mathematik und Naturwissenschaften, Bergische Universität Wuppertal, Gauß\-straße 20, D-42119 Wuppertal, Germany}

\email{farkas@math.uni-wuppertal.de} 
\author{Henrik Kreidler}

\address{Fakult\"at f\"ur Mathematik und Naturwissenschaften, Bergische Universität Wuppertal, Gauß\-straße 20, D-42119 Wuppertal, Germany} 
\email{kreidler@uni-wuppertal.de}

\def\balint#1{{\color{blue}#1}} 
\def\henrik#1{{\color{orange}#1}}
\begin{abstract}
	The Koopman linearization of measure-preserving systems or topological dynamical systems on compact spaces has proven to be extremely useful. In this article we look at dynamics given by continuous semiflows on completely regular spaces which arise naturally from solutions of PDEs. We introduce Koopman semigroups for these semiflows on spaces of bounded continuous functions. As a first step we study their continuity properties as well as their infinitesimal generators. We then characterize them algebraically (via derivations) and lattice theoretically (via Kato's equality). Finally, we demonstrate---using the example of attractors---how this Koopman approach can be used to examine properties of dynamical systems. 
\end{abstract}

\maketitle The idea to investigate properties of a dynamical system via the time-evolution of its observables goes all to the way back to the beginnings of thermodynamics and statistical mechanics at the end of the 19th century. Boltzmann and others observed that it is impossible to exactly determine the position and momentum of each and every particle of a gas or fluid. So it is a physical necessity to observe how measurement parameters such as temperature or pressure evolve over time. This basic, but important idea can be translated into the mathematical analysis of dynamical systems: Instead of examining a semigroup action $\varphi$ on a state space $X$, we pass to a suitable vector space $\mathrm{F}(X)$ of functions on $X$ and the induced semigroup $\euT_\varphi$ thereon. The advantage of this procedure is the gain of algebraic structure. Most importantly, we arrive at a semigroup of \emph{linear} mappings on $\mathrm{F}(X)$, and hence the transition from the original system to this semigroup can be seen as a global linearization.

This is an approach used in many mathematical disciplines, e.g., in differential geometry (differentiable dynamics on a smooth manifold induce operators on the space of smooth functions), complex analysis (holomorphic mappings induce operators on Banach and Fréchet spaces of holomorphic functions), algebraic geometry (morphisms of algebraic varieties induce mappings on algebras of regular functions) and ergodic theory (measure-preserving dynamics induce operators on the corresponding $\Ell^p$-spaces). Depending on the respective mathematical background, the arising linear maps are known as \emph{composition operators}, \emph{pullbacks}, \emph{induced operators} or \emph{Koopman operators} (named after B.~O.~Koopman who introduced them in the context of ergodic theory in \cite{Koop1931}). 

In this article we are interested in the Koopman linearization of topological dynamical systems, i.e., continuous semiflows on topological spaces. In the case of dynamics on a compact space, this approach is systematically pursued in \cite{EFHN2015} for the time-discrete case and---to some extent---in \cite{PosOp1986} for the time-continuous one. The heart of this \enquote{Koopman theory} for topological dynamical systems are the elegant results on Banach algebras and Banach lattices allowing a translation of properties of topological dynamical systems to the language of functional analysis. In particular, the famous representation theorems of Gelfand for commutative C$^*$-algebras (see, e.g., \cite[Section 1.4]{Dixm1977}) and Kakutani for AM-spaces (see, e.g., \cite[Section II.7]{Scha1974}) imply that the categories of topological dynamical systems on compact spaces on one hand, and strongly continuous semigroups of unital algebra or lattice homomorphisms on unital commutative C$^*$-algebras or on unital AM-spaces on the other hand, are equivalent.

We are interested in extending the Koopman theory to the more general framework of semiflows on completely regular spaces. First results in this direction have been given in \cite{DoNe1993}, \cite{DoNe1996} \cite{DoNe2000}, \cite{Kueh2001} and \cite{Kueh2020} for dynamics on metric spaces. The interest in this very general class of topological dynamical systems stems from the fact that they often arise from the solutions of abstract Cauchy problems on a Banach space $E$ of the form 
\begin{align*}
	u'(t) &= Au(t) \textrm{ for } t \geq 0,\\
	u(0) &= u_0 
\end{align*}
where $A \colon \dom(A) \rightarrow E$ is a (generally non-linear) map defined on a subset $\dom(A)$ of $E$ and $u_0 \in \dom(A)$ is a fixed initial value. These, in term, can be used to give a functional analytic description of (non-linear) evolution equations (see, e.g., \cite{Miya1992}, \cite{Tema1998}, \cite{Robi2001} and \cite{GLMY2018}). 

In this article we describe a Koopman linearization of dynamical systems on completely regular spaces (e.g., subsets of Banach spaces). After giving an introduction as well as a discussion of examples and basic continuity properties in Section 1, we proceed by extending the algebraic and lattice theoretic characterizations of Koopman semigroups from the compact case to our more general situation in Section 2. In the final section we give an outlook how Koopmanism can be applied to obtain a new approach to key dynamical concepts such as attractors.

In the following all (locally) compact spaces, completely regular spaces and topological vector spaces are assumed to be Hausdorff.

\section{Koopman linearization of semiflows on completely regular spaces} Topological dynamical systems have been studied extensively if the underlying space is compact (see, e.g. \cite{GoHe1955}, \cite{Ausl1988}, \cite{Bron1979} and \cite{deVr1993}). Here we start from a more general setting and consider dynamics on a non-empty completely regular space $X$.
\begin{definition}
	A family $\varphi = (\varphi_t)_{t \geq 0}$ of continuous mappings on $X$ is a \emph{semiflow} if 
	\begin{itemize}
		\item $\varphi_s \circ \varphi_t = \varphi_{s+t}$ for all $s,t \geq 0$, and 
		\item $\varphi_0 = \mathrm{id}_{X}$. 
	\end{itemize}
	It is \emph{(jointly) continuous} if the map 
	\begin{align*}
		\R_{\geq 0} \times X \rightarrow X, \quad (t,x) \mapsto \varphi_t(x) 
	\end{align*}
	is continuous. In this case, we call the pair $(X;\varphi)$ a \emph{topological dynamical system}\footnote{Some authors use the term \emph{semi-dynamical system} for obvious reasons.}. 
\end{definition}
A simple example of a continuous semiflow on $X = [0,\infty)$ is given by $\varphi_t(x) \defeq x + t$ for $(t,x) \in \R_{\geq 0}^2$. More interesting ones arise from partial differential equations which are formulated as abstract Cauchy-problems on Banach or Hilbert spaces. 
\begin{example}
	[$m$-accretive relations, the Crandall--Liggett theorem] Let $E$ be a Banach space and $A\subseteq E\times E$ be a relation such that $\Id+\omega A$ is accretive for some $\omega\in \R$, meaning that $(\Id+\omega A)^{-1}$ is a $1$-Lipschitz continuous function on $\dom ((\Id+\omega A)^{-1})$. Set $X:=\overline{\dom(A)}\subseteq E$. The Crandall--Liggett theorem states that if $X\subseteq \rg(\Id+\lambda A)$ for all sufficiently small $\lambda>0$, then for every $x\in X$ and $t\geq 0$ the limit
	\[ \varphi(t,x):=\lim_{k\to\infty} \left(\Id+\frac{t}{k}\right)^{-k}x \]
	exists and defines a dynamical system $\varphi:\R_{\geq 0}\times X\to X$ on $X$. See \cite{CranLig} and, e.g., \cite{BCP}, \cite{Barbu}, \cite{Miya1992} for details and relations to differential equations. 
\end{example}
\begin{example}[Analytic semigroups and semilinear equations]\label{examp:analytic} 
	Let $E$ be a Banach space and let $A$ be an invertible linear operator such that $-A$ is the generator of a bounded, analytic semigroup on $E$. Suppose that the function $f:\dom (A^{\alpha})\to E$ is Lipschitz continuous for some given $\alpha\in (0,1)$. Then for each $x\in \dom (A^{\alpha})$ the Cauchy problem
	\[ u'(t)+Au(t)=f(u(t)) \quad (t>0), \quad u(0)=x \]
	has a unique solution $u\in \Ce^1(\R_{> 0},E)\cap \Ce(\R_{\geq 0},\dom (A^\alpha))$ (see \cite[Theorem 3.3]{Pazy}). With $X=\dom (A^\alpha)$, and scrutinizing Section 3 in \cite{Pazy}, we immediately obtain that $\varphi(t,x)=u(t)$ (with the unique solution subject to $u(0)=x$) defines a dynamical system. 
\end{example}
\begin{example}
	Informally speaking, similarly to Example \ref{examp:analytic}, if a (partial) differential equation that admits unique, global solutions to all initial values with continuous dependence on initial data \emph{(Hadamard well-posedness)}, then this yields a dynamical system $\varphi$ via setting $\varphi(t,x)=u(t)$ (with the unique solution subject to $u(0)=x$). We refer to the Epilogue by G.~Nickel to \cite{EN00} for some thoughts about the underlying philosophy. Such examples are provided for instance by the two-dimensional Navier--Stokes equation and some reaction-diffusion equations (cf.{} Example \ref{examp:analytic} concerning the latter), see, e.g., \cite[Sections III.1, III.2]{Tema1998} or \cite[Chapters 8 and 9]{Robi2001}. 
\end{example}

We now define Koopman linearizations for such systems. A suitable \enquote{space of observables} is the vector space $\Cb(X)$ of all bounded complex-valued continuous functions on $X$. 
\begin{definition}
	For a semiflow $\varphi = (\varphi_t)_{t \geq 0}$ on $X$ we define the corresponding \emph{Koopman semigroup} $\euT_\varphi = (T_\varphi(t))_{t \geq 0}$ on $\Cb(X)$ by $T_\varphi(t) f \defeq f \circ \varphi_t$ for $f \in \Cb(X)$ and $t \geq 0$. 
\end{definition}

It is evident that $T_\varphi(t)$ is a linear operator on $\Cb(X)$,  $T_\varphi(s+t) = T_\varphi(s) T_\varphi(t)$ holds for all $s, t \geq 0$, and $T_\varphi(0) = \Id_{\Cb(X)}$. Thus, $\euT_\varphi$ is in fact a semigroup of linear operators. We now study its continuity properties and therefore have to consider a suitable topology on $\Cb(X)$.

Clearly, $\Cb(X)$ is a Banach space with respect to the supremum norm $\|\cdot\|_\infty$. However, besides the norm topology $\tau_{\|\cdot\|_\infty}$ there is a second natural topology on this space, namely the compact-open topology $\tau_\mathrm{c}$ defined by the seminorms $p_K (f) \defeq \sup_{x \in K} |f(x)|$ for $f \in \Cb(X)$ and every compact subset $K \subseteq X$. Both topologies have their disadvantages: On one hand, the norm topology is too strong to reflect the properties of $X$. On the other hand, equipping $\Cb(X)$ with the compact-open topology generally does not yield a complete topological vector space. In fact, $\Cb(X)$ is dense in the space $\Ce(X)$ of \emph{all} continuous complex-valued functions on $X$ with respect to the compact-open topology. In order to circumvent these problems we \enquote{mix} the two topologies. We refer to \cite{Jarc1981} and \cite{Scha1999} for an introduction to locally convex topologies.

\begin{definition}
	The \emph{mixed topology} $\tau_{\mathrm{m}}$ on $\Cb(X)$ is the strongest locally convex topology on $\Cb(X)$ agreeing on norm-bounded sets with the compact-open topology $\tau_\mathrm{c}$. 
\end{definition}
Such a locally convex topology indeed exists, see \cite{Wiweger,Wiweger2}, where this construction first appeared in an abstract context, or \cite[Chapters 1 and 2]{Coop1978} and \cite[Section 2.10.D]{Jarc1981}. Sometimes it is also called the \emph{strict topology} and defined explicitly via the seminorms $p_g$ on $\Cb(X)$ given by $p_g(f) \coloneqq \|fg\|_\infty$ for $f \in \Cb(X)$ where $g\colon X \rightarrow \C$ is a bounded function vanishing at infinity (i.e., for every $\varepsilon >0$ there is a compact subset $K \subseteq X$ with $|f(x)| \leq \varepsilon$ for every $x \in X \setminus K$). Equivalently, one can only consider the seminorms $p_g$ induced by positive upper semicontinuous functions $g$ (as done in \cite[Paragraph 3]{Summ1971} and \cite[Section II.1]{Coop1978}), see \cite[Theorem 2.4]{Sentilles}.

We refer to the fundamental work \cite{Sentilles} of Sentilles on strict topologies on $f \in \Cb(X)$, and note that his construction used the \v Cech--Stone compactification of $X$.

We now list some important properties of this topology (see \cite[Proposition II.1.2 and Corollary 1.9]{Coop1978} and \cite[Theorems 2.10.4 and 3.6.9]{Jarc1981}). Recall here, that $X$ is \emph{compactly generated} (or a \emph{$k$-space}) if a subset $A \subseteq X$ is closed whenever $A \cap K$ is closed for every compact subset $K \subseteq X$ (see \cite[Section XI.9]{Dugu1978}). Every locally compact space and every first-countable space is compactly generated. In particular, every metric space---and thus every normed vector space---is compactly generated. A function $f \colon X \rightarrow \C$ on a compactly generated space $X$ is continuous if and only if its restrictions to compact sets are continuous.
\begin{proposition}\label{basicprops} 
	\begin{enumerate}
		[(i)] 
		\item The following inclusions hold: $\tau_{\mathrm{c}} \subseteq \tau_{\mathrm{m}} \subseteq \tau_{\|\cdot\|_\infty}$.
		\item A set is bounded with respect to the mixed topology $\tau_{\mathrm{m}}$ if and only if it is norm bounded. 
		\item A sequence $(f_n)_{n \in \N}$ in $\Cb(X)$ converges to $f \in \Cb(X)$ with respect to $\tau_{\mathrm{m}}$ if and only if it is norm bounded and converges with respect to $\tau_{\mathrm{c}}$.

		\item The space $\Cb(X)$ is complete with respect to $\tau_{\mathrm{m}}$ if and only if $X$ is a compactly generated space. 
	\end{enumerate}
\end{proposition}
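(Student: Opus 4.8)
The plan is to base parts (i)--(iii) on the description of $\tau_{\mathrm{m}}$ by the seminorms $p_g(f)=\|fg\|_\infty$ with $g\colon X\to\C$ bounded and vanishing at infinity, as recalled just before the proposition. For (i) there are two observations: on one hand $p_g\le\|g\|_\infty\cdot\|\cdot\|_\infty$, so every defining seminorm of $\tau_{\mathrm{m}}$ is $\|\cdot\|_\infty$-continuous and hence $\tau_{\mathrm{m}}\subseteq\tau_{\|\cdot\|_\infty}$; on the other hand, for each compact $K\subseteq X$ the indicator $\car_K$ is bounded, vanishes at infinity, and satisfies $p_{\car_K}=p_K$, so the $\tau_{\mathrm{c}}$-seminorms occur among those defining $\tau_{\mathrm{m}}$ and thus $\tau_{\mathrm{c}}\subseteq\tau_{\mathrm{m}}$.

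For (ii), norm-bounded sets are $\tau_{\mathrm{m}}$-bounded by (i), a coarser topology having more bounded sets. For the converse I would argue contrapositively. If $A\subseteq\Cb(X)$ is not norm-bounded, choose $f_n\in A$ and $x_n\in X$ with $|f_n(x_n)|\ge 4^n$, and split into two cases. If some compact set contains infinitely many of the $x_n$, then $A$ is already unbounded for the corresponding $\tau_{\mathrm{c}}$-seminorm, hence for $\tau_{\mathrm{m}}$. Otherwise every point of $X$ is attained only finitely often by $(x_n)$, so the function $g$ given by $g(x_n)\defeq 2^{-n}$ (the largest such value in case of coincidences) and $g\defeq 0$ elsewhere is bounded and vanishes at infinity, while $p_g(f_n)\ge|f_n(x_n)|\,2^{-n}\ge 2^n$, so $A$ is $\tau_{\mathrm{m}}$-unbounded. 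Part (iii) then follows quickly: the forward direction is immediate from (i) and (ii); conversely, if $(f_n)$ is norm-bounded by some $M$ (whence $\|f\|_\infty\le M$, $f$ being the pointwise limit) and $f_n\to f$ in $\tau_{\mathrm{c}}$, then for $g$ vanishing at infinity and $\varepsilon>0$ one picks a compact $K$ with $|g|\le\varepsilon$ off $K$ and estimates $p_g(f_n-f)\le\|g\|_\infty\,p_K(f_n-f)+2M\varepsilon$; since $p_K(f_n-f)\to 0$ and $\varepsilon$ is arbitrary, $p_g(f_n-f)\to 0$.

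The substantial part is (iv), which I would reduce to the general fact about mixed topologies (see \cite[Chapter~1]{Coop1978}, \cite[Section~2.10]{Jarc1981}, originating in \cite{Wiweger}) that $(\Cb(X),\tau_{\mathrm{m}})$ is complete if and only if its closed unit ball $B\defeq\{f\in\Cb(X):\|f\|_\infty\le 1\}$ is $\tau_{\mathrm{c}}$-complete --- equivalently $\tau_{\mathrm{m}}$-complete, as the two topologies agree on $B$; note that $B=\bigcap_{x\in X}\{f:|f(x)|\le 1\}$ is $\tau_{\mathrm{c}}$-closed. It then remains to show that $B$ is $\tau_{\mathrm{c}}$-complete exactly when $X$ is compactly generated. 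If $X$ is compactly generated, a $\tau_{\mathrm{c}}$-Cauchy net in $B$ is Cauchy at each point, hence has a pointwise limit $f$ with $|f|\le 1$; the convergence is uniform on compact sets, so each $f|_K$ is continuous, hence $f$ is continuous by the fact recalled before the proposition, so $f\in B$ and the net $\tau_{\mathrm{c}}$-converges to it. Conversely, if $X$ is not compactly generated, the characterization of compact generation (cf.\ \cite[Section~XI.9]{Dugu1978}) provides a bounded $f\colon X\to\C$ with $\|f\|_\infty\le 1$ whose restrictions to all compact sets are continuous but which is itself discontinuous; embedding $X$ into its \v Cech--Stone compactification $\beta X$, one extends each $f|_K$, via Tietze's theorem applied in $\beta X$, to a function $g_K\in B$ with $g_K|_K=f|_K$. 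The net $(g_K)_K$, directed by inclusion of the compact subsets of $X$, lies in $B$ and converges to $f$ pointwise and uniformly on compacta, so it is $\tau_{\mathrm{c}}$-Cauchy; but it has no $\tau_{\mathrm{c}}$-limit in $\Cb(X)$, since such a limit would agree pointwise with the discontinuous $f$. Thus $B$ is not $\tau_{\mathrm{c}}$-complete, and $(\Cb(X),\tau_{\mathrm{m}})$ is not complete.

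The step I expect to be the genuine obstacle is the structural reduction in (iv): that completeness of the mixed topology can be detected on the unit ball alone. Once this is granted, both implications come down to elementary facts about uniform convergence on compacta and Tietze's extension theorem. The only other point needing attention is the case distinction in the converse of (ii), which is precisely what makes the finer seminorms $p_g$ --- rather than the $\tau_{\mathrm{c}}$-seminorms --- indispensable there.
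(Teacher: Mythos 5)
Parts (i)--(iii) of your argument are essentially correct, and since the paper gives no proof of its own here (it only cites Cooper and Jarchow), any direct argument is already a different route. Two small remarks: the case distinction in your proof of (ii) is unnecessary, because the function $g$ with $g(x_n)\defeq 2^{-n}$ (largest value at coincidences) vanishes at infinity in every case, the set $\{|g|>\varepsilon\}$ being finite; and in (iv) your reduction of completeness of $(\Cb(X),\tau_{\mathrm{m}})$ to $\tau_{\mathrm{c}}$-completeness of the unit ball is a legitimate cited ingredient (it is essentially the result of Cooper/Wiweger that the paper itself invokes), and your proof of the implication \enquote{$X$ compactly generated $\Rightarrow$ complete} is fine, as is the Tietze/\v Cech--Stone construction (after composing the extensions with the radial retraction onto the unit disc so that $g_K\in B$).

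The genuine gap is the converse direction of (iv). You claim that failure of compact generation yields a bounded function $f$ with $\|f\|_\infty\le 1$ whose restrictions to all compact sets are continuous but which is discontinuous. That is not what the $k$-property negates: a space in which every (bounded) function that is continuous on compacta is continuous is called a $k_R$-space, and this is a strictly weaker property than being a $k$-space --- there are completely regular $k_R$-spaces that are not compactly generated, the standard example being $\R^{\omega_1}$. Failure of compact generation is witnessed by a set $A$ with $A\cap K$ closed for every compact $K$ but $A$ not closed, and $\car_A$ (or any function built from $A$) need not be continuous on compacta, so the desired $f$ cannot be manufactured in general. What your construction actually proves is the equivalence \enquote{$(\Cb(X),\tau_{\mathrm{m}})$ complete $\iff$ $X$ is a $k_R$-space}, which is precisely the form the result takes in the sources cited in the paper (Cooper, Proposition II.1.2; Sentilles); the implication \enquote{not compactly generated $\Rightarrow$ not complete} does not follow by your route, and indeed for a $k_R$-space that is not a $k$-space the unit ball is $\tau_{\mathrm{c}}$-complete and hence $\Cb(X)$ is $\tau_{\mathrm{m}}$-complete. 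So the only-if half of (iv) as stated must either be read with \enquote{$k_R$-space} in place of \enquote{compactly generated}, or restricted to classes of spaces (e.g.\ metrizable or locally compact ones) where the two notions coincide; your proof cannot close this gap because the asserted \enquote{characterization} it rests on is false.
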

\begin{remark}
	We point out that many desirable properties of locally convex spaces cannot be expected to hold for the mixed topology. For example, $\Cb(X)$ equipped with $\tau_m$ is only barrelled or bornological if $X$ is compact (in which case $\tau_{\mathrm{c}} = \tau_{\mathrm{m}} = \tau_{\|\cdot\|_\infty}$), see \cite[Proposition II.1.2 5)]{Coop1978}  or \cite[Theorem 4.8]{Sentilles}. 
\end{remark}
From now on, $\Cb(X)$ is equipped with the mixed topology unless explicitly stated otherwise. We then obtain the following characterization of the dual space (see \cite[Section II.3]{Coop1978} or \cite[Theorem 7.6.3]{Jarc1981}). Here, we write $\Me(X)$ for the space of all regular complex Borel measures $\mu$ on $X$, where a complex Borel measure $\mu$ on $X$ is \emph{regular} if its variation $|\mu|$ satisfies 
\begin{align*}
	|\mu|(A) = \sup\{|\mu|(K) \mid K \subseteq A \textrm{ compact}\} = \inf\{|\mu|(O) \mid O \supseteq A \textrm{ open}\} 
\end{align*}
for every Borel measurable set $A \subseteq X$.

\begin{theorem}
	The map 
	\begin{align*}
		\Me(X) \rightarrow \Cb(X)', \quad \mu \mapsto \left[f \mapsto \int f \dd\mu\right] 
	\end{align*}
	is an isomorphism of vector spaces. 
\end{theorem}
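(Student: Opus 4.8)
The plan is to reduce the statement to the classical Riesz representation theorem on a compact space by passing to the Stone--Čech compactification $\beta X$, using the canonical isometric algebra isomorphism $\Cb(X)\cong\Ce(\beta X)$, $f\mapsto\hat f$, where $\hat f$ is the unique continuous extension of $f$ to $\beta X$. It is convenient to isolate the notion of a \emph{tight} functional: call $\Phi\in\Cb(X)'$ tight if for every $\varepsilon>0$ there is a compact $K\subseteq X$ with $|\Phi(f)|\le\varepsilon\|f\|_\infty$ for all $f\in\Cb(X)$ vanishing on $K$. The proof then splits into four parts: (a) the map $\mu\mapsto[f\mapsto\int f\dd\mu]$ is well defined (lands in $\Cb(X)'$); (b) it is linear (immediate) and injective; (c) it is surjective. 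Throughout, the linchpin will be the identification of the range with the tight functionals and, via Riesz on $\beta X$, with regular measures on $\beta X$ whose variation is carried by a $\sigma$-compact subset of $X$.

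For (a), fix $\mu\in\Me(X)$. Since $|\mu|$ is a finite, inner-regular Borel measure, choose compact sets $K_0\subseteq K_1\subseteq\cdots$ with $|\mu|(X\setminus K_n)\le 4^{-n}$ and set $g\defeq\sum_{n=0}^\infty 2^{-n}\car_{K_n}$. This $g$ is bounded and vanishes at infinity, so $p_g$ is one of the seminorms defining $\tau_{\mathrm{m}}$; on the ring $K_m\setminus K_{m-1}$ one has $g\equiv 2^{-m+1}$, and summing the resulting estimates over $m$ gives $\int|f|\dd|\mu|\le C\,p_g(f)$ for a constant $C=C(\mu)<\infty$. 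Hence $\bigl|\int f\dd\mu\bigr|\le C\,p_g(f)$, so the functional $\Phi_\mu\defeq[f\mapsto\int f\dd\mu]$ is $\tau_{\mathrm{m}}$-continuous; the same splitting (taking $K$ with $\sup_{X\setminus K}|g|$ small) shows $\Phi_\mu$ is tight.

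For (b) and (c), I would exploit that a $\tau_{\mathrm{m}}$-continuous linear functional is norm-continuous (as $\tau_{\mathrm{m}}\subseteq\tau_{\|\cdot\|_\infty}$) and is in fact tight: writing $|\Phi|\le C\max_i p_{g_i}\le C\,p_g$ with $g\defeq\max_i|g_i|$ again vanishing at infinity, and estimating $p_g(f)\le\sup_{X\setminus K}|g|$ for $\|f\|_\infty\le1$, $f|_K=0$, gives tightness with some compact $K_\varepsilon$. Applying Riesz on the compact space $\beta X$ to $\Phi\in\Ce(\beta X)^*$ yields a regular $\nu\in\Me(\beta X)$ with $\Phi(f)=\int_{\beta X}\hat f\dd\nu$; tightness forces $|\nu|(\beta X\setminus K_\varepsilon)\le\varepsilon$, because $\beta X\setminus K_\varepsilon$ is open and $|\nu|$ of an open set is the supremum of $\bigl|\int h\dd\nu\bigr|$ over $h\in\Ce(\beta X)$ with $\|h\|_\infty\le1$ and support in that set, each such $h$ being $\hat f$ for an $f\in\Cb(X)$ vanishing on $K_\varepsilon$. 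Thus $|\nu|$ is carried by the $\sigma$-compact set $K\defeq\bigcup_n K_{1/n}\subseteq X$, and one defines $\mu(B)\defeq\nu(B\cap K)$ for Borel $B\subseteq X$ (well posed since $B\cap K_n$ is Borel in the compact, hence closed, subspace $K_n\subseteq\beta X$, so $B\cap K$ is Borel in $\beta X$). Checking that $\mu\in\Me(X)$ amounts to transporting regularity of $\nu$ to $X$: inner regularity is immediate, and for outer regularity of $|\mu|$ at $B$ one picks $n$ with $|\mu|(X\setminus K_n)<\varepsilon$, an open $V\supseteq B\cap K_n$ in $\beta X$ with $|\nu|(V)<|\mu|(B)+\varepsilon$, and uses the $X$-open set $(V\cap X)\cup(X\setminus K_n)\supseteq B$. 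Since $\mu$ and $|\nu|$ both vanish off $K$ and $\hat f$ restricts to $f$ on $K$, one gets $\int_X f\dd\mu=\int_K\hat f\dd\nu=\int_{\beta X}\hat f\dd\nu=\Phi(f)$, proving surjectivity; and the recovery formula $\mu\leftrightarrow\nu$ applied to $\Phi_\mu$ (which is tight by (a)) gives $\Phi_\mu=0\Rightarrow\nu=0\Rightarrow\mu=0$, proving injectivity.

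The main obstacle is precisely this identification step: recognizing $\tau_{\mathrm{m}}$-continuity as tightness, and then showing that the Riesz measure $\nu$ on $\beta X$ of a tight functional descends to a genuine \emph{regular} complex Borel measure on $X$ via $\mu(B)=\nu(B\cap K)$ — the measure-theoretic bookkeeping around the $\sigma$-compact carrier $K$, and the behaviour of Borel sets of $X$ inside $\beta X$, are where care is required. Everything else (linearity, the norm estimate in (a), the isomorphism $\Cb(X)\cong\Ce(\beta X)$, and Riesz on the compact space $\beta X$) is routine or classical; this is in substance the argument of Sentilles, see also \cite[Section II.3]{Coop1978}.
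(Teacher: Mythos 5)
Your proof is correct and is in substance the argument the paper relies on: the paper offers no proof of its own but cites \cite[Section II.3]{Coop1978} and \cite[Theorem 7.6.3]{Jarc1981} and points to Sentilles' treatment \cite{Sentilles} via the \v Cech--Stone compactification, which is exactly your route ($\tau_{\mathrm{m}}$-continuity $\Rightarrow$ tightness $\Rightarrow$ Riesz representation on $\beta X$ $\Rightarrow$ descent to a regular measure carried by a $\sigma$-compact subset of $X$). The only step worth spelling out is injectivity: to pass from $\nu=0$ to $\mu=0$ you should note that, by uniqueness in the Riesz theorem on $\beta X$, the measure representing $\Phi_\mu$ is the (regular) push-forward of $\mu$ along the embedding $X\hookrightarrow\beta X$, so that $\mu$ is recovered from $\nu$ by taking traces, every Borel subset of $X$ being the trace on $X$ of a Borel subset of $\beta X$.
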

Using this isomorphism we identify $\Cb(X)'$ with $\Me(X)$ in the following.

With the \enquote{right} topology at our disposal, we now investigate the continuity properties of Koopman semigroups. Recall that a mapping $m:I \rightarrow \LLL(E,F)$ from an interval $I \subseteq \R$ to the space of continuous linear operators $\LLL(E,F)$ between topological vector spaces $E$ and $F$ is 
\begin{itemize}
	\item \emph{strongly continuous} if the map $m_x:I \rightarrow E, \, t \mapsto m(t)x$ is continuous for every $x \in E$. 
	\item \emph{locally equicontinuous} if the set $m(K) \subseteq \LLL(E,F)$ is equicontinuous for every compact subset $K \subseteq I$. 
\end{itemize}
We abbreviate $\LLL(E):=\LLL(E,E)$. Note that many results for the theory of strongly continuous one-parameter semigroups on Banach spaces can be generalized to locally equicontinuous and strongly continuous semigroups on sequentially complete locally convex spaces (see \cite{Komu1968}, \cite{Ouch1973}, \cite{Demb1974}, \cite{AlKu2002} and \cite{Kraa2016}).
\begin{proposition}\label{prop:charcont} 
	Consider a semiflow $\varphi$ on $X$ and the induced Koopman semigroup $\euT_\varphi$ on $\Cb(X)$. 
	\begin{enumerate}
		[(i)] 
		\item If $\varphi$ is jointly continuous, then $\euT_\varphi$ is strongly continuous and locally equicontinuous. 
		\item If $X$ is compactly generated and $\euT_\varphi$ is strongly continuous, then $\varphi$ is jointly continuous. 
	\end{enumerate}
\end{proposition}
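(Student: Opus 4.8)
\emph{Proof proposal.} The plan is to lean on the two structural facts from Proposition~\ref{basicprops}: that $\tau_{\mathrm c}\subseteq\tau_{\mathrm m}$, and that $\tau_{\mathrm m}$ agrees with the compact-open topology $\tau_{\mathrm c}$ on norm-bounded subsets of $\Cb(X)$; I will also use the description of $\tau_{\mathrm m}$ by the seminorms $p_g(f)=\|fg\|_\infty$ with $g\colon X\to\C$ bounded and vanishing at infinity. For \emph{(i)}, strong continuity: fix $f\in\Cb(X)$. Since $\|T_\varphi(t)f\|_\infty\leq\|f\|_\infty$, the whole orbit stays in the norm-bounded set $B=\{g:\|g\|_\infty\leq\|f\|_\infty\}$, on which $\tau_{\mathrm m}=\tau_{\mathrm c}$, so it is enough to show that $t\mapsto T_\varphi(t)f$ is continuous for $\tau_{\mathrm c}$, i.e. that $\sup_{x\in C}|f(\varphi_t(x))-f(\varphi_{t_0}(x))|\to0$ as $t\to t_0$, for each compact $C\subseteq X$ and each $t_0\geq0$. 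This I would get from joint continuity of $\varphi$: the map $(t,x)\mapsto f(\varphi_t(x))$ is continuous, hence uniformly continuous on the compact set $I_0\times C$ for a compact neighbourhood $I_0$ of $t_0$ in $\R_{\geq0}$, and specializing uniform continuity to equal second coordinates gives exactly the required estimate.

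For local equicontinuity, fix a compact $K\subseteq\R_{\geq0}$. Given $g$ bounded and vanishing at infinity I would set $g'(y):=\sup\{|g(x)| : x\in X,\ t\in K,\ \varphi_t(x)=y\}$, with the convention $\sup\emptyset=0$. Then $0\leq g'\leq\|g\|_\infty$, and $g'$ still vanishes at infinity, since for any compact $K_\varepsilon$ outside which $|g|\leq\varepsilon$ one has $\{g'>\varepsilon\}\subseteq\varphi(K\times K_\varepsilon)$, which is compact by joint continuity of $\varphi$. Moreover $|g(x)|\leq g'(\varphi_t(x))$ for every $x\in X$ and every $t\in K$, whence $p_g(T_\varphi(t)f)=\sup_x|f(\varphi_t(x))g(x)|\leq\sup_y|f(y)|g'(y)=p_{g'}(f)$ with a bound uniform in $t\in K$; this is precisely $\tau_{\mathrm m}$-equicontinuity of $\{T_\varphi(t):t\in K\}$ (the case $K=\{t\}$ in particular records $T_\varphi(t)\in\LLL(\Cb(X))$).

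For \emph{(ii)}, assume $X$ is compactly generated and $\euT_\varphi$ is strongly continuous. Fix $f\in\Cb(X)$, $T>0$ and a compact $C\subseteq X$. Strong continuity and $\tau_{\mathrm c}\subseteq\tau_{\mathrm m}$ make $t\mapsto T_\varphi(t)f$ continuous for $\tau_{\mathrm c}$, hence $t\mapsto(T_\varphi(t)f)|_C$ a norm-continuous path $[0,T]\to\Ce(C)$; a norm-continuous $\Ce(C)$-valued path is easily seen to define a jointly continuous function on $[0,T]\times C$, so $(t,x)\mapsto f(\varphi_t(x))$ is continuous on $[0,T]\times C$. As $X$ is completely regular and Hausdorff, its topology is the initial one with respect to $\Cb(X)$, so letting $f$ range over $\Cb(X)$ shows that $\varphi|_{[0,T]\times C}\colon[0,T]\times C\to X$ is continuous for every $T>0$ and every compact $C\subseteq X$. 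Finally, since $\R_{\geq0}$ is locally compact Hausdorff and $X$ is a $k$-space, the product $\R_{\geq0}\times X$ is again a $k$-space, so $\varphi$ is continuous as soon as its restriction to every compact $D\subseteq\R_{\geq0}\times X$ is continuous; and each such $D$ lies in some $[0,T]\times\pi_2(D)$ with $\pi_2(D)$ compact, where continuity has just been established.

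The genuinely routine parts are the two uniform-continuity/compactness arguments in (i) and the elementary fact about $\Ce(C)$-valued paths; the only mildly inventive point in (i) is the construction of $g'$. I expect the main obstacle to be the last step of (ii): deducing continuity of $\varphi$ from its continuity on all the sets $[0,T]\times C$ requires $\R_{\geq0}\times X$ to be compactly generated, i.e. the theorem that the product of a locally compact Hausdorff space with a $k$-space is again a $k$-space (see, e.g., \cite{Dugu1978}). This is exactly where the hypothesis on $X$ enters, and it cannot be avoided — in a general completely regular space a convergent net need not have relatively compact range, so a naive pointwise argument would not close the gap.
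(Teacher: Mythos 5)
Your proposal is correct. For strong continuity in (i) and for part (ii) it follows essentially the paper's route: reduce to the compact-open topology using norm-boundedness of the orbits and the agreement of $\tau_{\mathrm{m}}$ with $\tau_{\mathrm{c}}$ on norm-bounded sets, pass between norm-continuity of $t\mapsto (T_\varphi(t)f)|_C$ in $\Ce(C)$ and joint continuity of $(t,x)\mapsto f(\varphi_t(x))$ (the paper quotes \cite[Lemma 4.16]{EFHN2015}, you prove the two directions by the uniform-continuity and $\varepsilon/2$ arguments), use that $X$ carries the initial topology with respect to $\Cb(X)$, and close part (ii) with the theorem that $\R_{\geq 0}\times X$ is again compactly generated \cite[Theorem XI.4.4]{Dugu1978} --- exactly the paper's argument. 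The one step where you genuinely diverge is local equicontinuity: the paper invokes Cooper's criterion \cite[Corollary I.1.7]{Coop1978}, by which $\tau_{\mathrm{m}}$-equicontinuity follows from compact-open equicontinuity on norm-bounded sets, and then uses the one-line estimate $p_K(T_\varphi(t)f)\leq p_L(f)$ with $L=\varphi([0,t_0]\times K)$ compact; you instead work directly with the strict-topology seminorms $p_g$, $g$ bounded and vanishing at infinity, and build the majorant $g'(y)=\sup\{|g(x)| : x\in X,\ t\in K,\ \varphi_t(x)=y\}$, checking that $g'$ vanishes at infinity via $\{g'>\varepsilon\}\subseteq\varphi(K\times K_\varepsilon)$ and obtaining $p_g(T_\varphi(t)f)\leq p_{g'}(f)$ uniformly in $t\in K$. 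Your computation is correct (note that the paper's seminorm description of the strict topology imposes no continuity on $g$, so $p_{g'}$ is indeed an admissible seminorm), and it also records continuity of each single $T_\varphi(t)$; what it buys is independence from Cooper's abstract lemma, at the price of relying on the identification of $\tau_{\mathrm{m}}$ with the strict topology (Sentilles), which the paper states but the paper's own proof of this proposition does not need. The paper's estimate with $L=\varphi([0,t_0]\times K)$ is the slicker of the two, but both are complete.
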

\begin{proof}
	Observe first that by \cref{basicprops} (iii) $\euT_\varphi$ is strongly continuous on $\Cb(X)$ if and only if 
	\begin{align*}
		\R_{\geq 0} \rightarrow \Ce(K), \quad t \mapsto T_{\varphi_t}f|_K 
	\end{align*}
	is continuous with respect to the norm topology on $\Ce(K)$ for every compact subset $K \subseteq X$ and every $f \in \Cb(X)$. By \cite[Lemma 4.16]{EFHN2015} this is equivalent to the continuity of the map 
	\begin{align*}
		\R_{\geq 0} \times K \rightarrow \C, \quad (t,x) \mapsto f(\varphi_t(x)) 
	\end{align*}
	for every compact subset $K \subseteq X$ and every $f \in \Cb(X)$. Since $X$ carries the initial topology with respect to the functions in $\Cb(X)$, this simply means that the restricted mappings 
	\begin{align*}
		\R_{\geq 0} \times K \rightarrow X, \quad (t,x) \mapsto \varphi_t(x) 
	\end{align*}
	are continuous for every compact subset $K \subseteq X$. This certainly holds if $\varphi$ is a jointly continuous semiflow. On the other hand, if $X$ is compactly generated, then so is $\R_{\geq 0} \times X$ (see \cite[Theorem 4.4]{Dugu1978}) and therefore the continuity of these mappings imply that $\varphi$ is jointly continuous.
	
	To finish the proof we show that $\euT_\varphi$ is locally equicontinuous whenever $\varphi$ is jointly continuous. We note that by \cite[Corollary I.1.7]{Coop1978}, a set $M$ of linear mappings on $\Cb(X)$ is equicontinuous if and only if for every norm bounded subset $F \subseteq \Cb(X)$ the set $M|_{F}$ of restrictions to $F$ is equicontinuous with respect to the compact-open topology. But---given $t_0 \geq 0$---for the set 
	\begin{align*}
		M_{t_0} \defeq \{T_\varphi(t)\mid t \in [0,t_0]\} 
	\end{align*}
	even more is true: It is equicontinuous with respect to the compact-open topology on the whole space $\Cb(X)$. In fact, if $K \subseteq X$ is a compact subset, then $L \defeq \varphi([0,t_0] \times K)$ is still compact and $p_K(T_\varphi(t)f) \leq p_L(f)$ for every $t \in [0,t_0]$ and every $f \in \Cb(X)$.
	\end{proof}

\begin{remark}
	When dealing with semigroups on Banach spaces with an additional locally convex topology there is a second natural approach besides looking at mixed topologies. Taking into account both topologies at once, one can use the concept of so-called \emph{bi-continuous semigroups} for which there is a rather rich theory (see, e.g., \cite{Kueh2001}, \cite{KuPhD} and \cite{BuFa2}). It follows from \cref{prop:charcont} that Koopman semigroups associated to jointly continuous semiflows on compactly generated completely regular spaces $X$ form a bi-continuous semigroup with respect to the norm and the compact-open topologies. It should also be noted that in some cases the class of bi-continuous semigroups on $\Cb(X)$ with respect to the compact-open topology is strictly larger than the one of locally equicontinuous, strongly continuous semigroups with respect to the mixed topology, see \cite[Example 4.1]{Fa09a}. However, if $X$ is a Polish space or a $\sigma$-compact, locally compact space, then both classes coincide, see \cite[Proposition 1.6, Theorem 3.1]{Fa09a}, \cite[Remark 2.5]{Fa04a}, a basis for this being the fundamental work \cite{Sentilles} of Sentilles on strict topologies.
\end{remark}

The most important tool in the theory of one-parameter operator semigroups is their \emph{generator}. Given a strongly continuous semigroup $\euT = (T(t))_{t \geq 0}$ of continuous operators on a locally convex space $E$, we introduce the operator $\delta \colon \dom(\delta) \rightarrow E$ by setting 
\begin{align*}
	\dom(\delta) &\defeq \left\{f \in E\mmid \lim_{t \rightarrow 0} \frac{T(t)f-f}{t} \textrm{ exists}\right\}, \textrm{ and }\\
	\delta f &\defeq \lim_{t \rightarrow 0} \frac{T(t)f-f}{t} \textrm{ for } f \in \dom(\delta). 
\end{align*}
For every $f \in\dom(\delta)$ the map $\R_{\geq 0} \rightarrow E, \, t \mapsto T(t)f$ is continuously differentiable with derivative $\delta T(t)f = T(t)\delta f$ for $t \in \R_{\geq 0}$ (see \cite[Proposition 1.2]{Komu1968}). Moreover, if $\euT$ is locally equicontinuous and $E$ is sequentially complete, then the generator $\delta$ 
\begin{itemize}
	\item is \emph{closed}, i.e., the graph of $\delta$ is closed in $E \times E$ (see \cite[Proposition 1.4]{Komu1968}), 
	\item is \emph{densely defined}, i.e., $\dom(\delta)$ is dense in $E$ (see \cite[Proposition 1.3]{Komu1968}), and 
	\item \emph{determines $\euT$}, i.e., if $\euS$ is a second strongly continuous and locally equicontinuous on $E$ with generator $\delta$, then $\euS = \euT$ (see \cite[Theorem 3.1]{Ouch1973}). 
\end{itemize}
In particular, the generator of the Koopman semigroup $\euT_\varphi$ associated to a jointly continuous semiflow $\varphi$ on a compactly generated space $X$ is closed and densely defined. From now on we denote this operator by $\delta_\varphi$. The following result shows that this is the same as the so-called \emph{Lie generator} of the semiflow. The proof is essentially the same as the one of \cite[Proposition 19]{Kueh2001} in the case of Polish spaces, but we include it for completeness (see also \cite[Proposition 2.4]{DoNe1996}). 
\begin{proposition}
	Let $\varphi = (\varphi_t)_{t \geq 0}$ be continuous semiflow on a compactly generated space $X$. For $f,g \in \Cb(X)$ the following assertions are equivalent. 
	\begin{enumerate}
		[(a)] 
		\item $f \in \dom(\delta_\varphi)$ and $\delta_\varphi f = g$. 
		\item For every $x \in X$ 
		\begin{align*}
			\lim_{t \rightarrow 0} \frac{f(\varphi_t(x)) -f(x)}{t} = g(x). 
		\end{align*} 
	\end{enumerate}
\end{proposition}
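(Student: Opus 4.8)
The plan is to prove the easy implication (a)$\Rightarrow$(b) from the continuity of point evaluations, and to obtain (b)$\Rightarrow$(a) by first upgrading the pointwise limit in (b) to an integral representation of the orbit maps $s \mapsto f(\varphi_s(x))$ and then feeding this into the defining property of the mixed topology.

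For (a)$\Rightarrow$(b): if $f \in \dom(\delta_\varphi)$ and $\delta_\varphi f = g$, then $\tfrac1t(T_\varphi(t)f-f) \to g$ in $\tau_{\mathrm m}$ as $t \downarrow 0$. Since $\tau_{\mathrm c} \subseteq \tau_{\mathrm m}$ (\cref{basicprops}), this convergence persists in the compact-open topology, and evaluating at a point $x \in X$---a $\tau_{\mathrm c}$-continuous operation, since $\{x\}$ is compact and $p_{\{x\}}(h) = |h(x)|$---yields exactly the limit in (b). This direction is essentially immediate.

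The substance is in (b)$\Rightarrow$(a). Fix $x \in X$ and set $u_x(s) \defeq f(\varphi_s(x))$ for $s \geq 0$; this is continuous because $\varphi$ is jointly continuous. Combining the semiflow law $\varphi_{s+t} = \varphi_t \circ \varphi_s$ with (b) applied at the point $\varphi_s(x)$, one sees that $u_x$ has at every $s \geq 0$ a right derivative equal to $g(\varphi_s(x))$, which is again a continuous function of $s$. I would then invoke the elementary fact that a continuous function on an interval whose right derivative exists at every point and is continuous is automatically continuously differentiable with that derivative---seen by checking that $s \mapsto u_x(s) - \int_0^s g(\varphi_r(x))\dd r$ has vanishing right derivative everywhere, hence is constant. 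This gives
\[
	f(\varphi_t(x)) - f(x) = \int_0^t g(\varphi_s(x))\dd s \qquad (t \geq 0,\ x \in X).
\]
From this representation I read off two facts. First, $|T_\varphi(t)f - f| \leq t\,\|g\|_\infty$ pointwise on $X$, so $\{\tfrac1t(T_\varphi(t)f - f) : 0 < t \leq 1\}$ lies in the norm-bounded set $\{h \in \Cb(X): \|h\|_\infty \leq \|g\|_\infty\}$. Second, for each compact $K \subseteq X$,
\[
	\sup_{x \in K}\left| \frac{f(\varphi_t(x)) - f(x)}{t} - g(x)\right| \;\leq\; \sup_{x \in K}\ \sup_{0 \leq s \leq t} \bigl| g(\varphi_s(x)) - g(x)\bigr|,
\]
and the right-hand side tends to $0$ as $t \downarrow 0$: the map $(s,x) \mapsto g(\varphi_s(x)) - g(x)$ is continuous on the compact set $[0,1] \times K$ and vanishes on $\{0\}\times K$, so if these suprema stayed bounded below by some $c > 0$, the decreasing family of nonempty closed sets $\bigl\{(s,x) \in [0,\tfrac1n]\times K : |g(\varphi_s(x)) - g(x)| \geq \tfrac c2\bigr\}$ would, by compactness, have a common point---necessarily in $\{0\}\times K$, which is absurd. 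Hence $\tfrac1t(T_\varphi(t)f - f) \to g$ in $\tau_{\mathrm c}$ as $t \downarrow 0$.

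To conclude, I would observe that the elements $\tfrac1t(T_\varphi(t)f - f)$ for $0 < t \leq 1$, together with $g$, all lie in one common norm-bounded set, on which---by the very definition of $\tau_{\mathrm m}$---the mixed and compact-open topologies induce the same relative topology; since the net converges to $g$ in $\tau_{\mathrm c}$ it therefore converges to $g$ in $\tau_{\mathrm m}$ as well, i.e. $f \in \dom(\delta_\varphi)$ with $\delta_\varphi f = g$. I expect the only real obstacle to be the first step of (b)$\Rightarrow$(a): extracting the integral representation from the purely pointwise hypothesis via the one-sided-derivative lemma. Once that is in hand, the norm-boundedness is automatic and the uniform convergence on compacta is a routine compactness argument.
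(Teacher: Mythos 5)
Your proof is correct, and its core coincides with the paper's: both arguments hinge on upgrading the pointwise hypothesis (b) to the orbitwise integral identity $f(\varphi_t(x)) - f(x) = \int_0^t g(\varphi_s(x))\dd s$ via the same elementary lemma that a continuous function with continuous right derivative is continuously differentiable. Where you diverge is in how this identity is converted into membership in $\dom(\delta_\varphi)$: the paper cites Komura's characterization of generators (\cite[Proposition 1.2]{Komu1968}), by which (a) is equivalent to the vector-valued identity $T_\varphi(t)f - f = \int_0^t T_\varphi(s)g\dd s$ in $\Cb(X)$, and then observes that this vector-valued identity is the same as the pointwise one; you instead verify the defining limit of $\delta_\varphi$ by hand, extracting from the integral representation the uniform bound $\|\tfrac1t(T_\varphi(t)f-f)\|_\infty \leq \|g\|_\infty$ and uniform convergence of the difference quotients on compacta, and then using that $\tau_{\mathrm m}$ and $\tau_{\mathrm c}$ agree on norm-bounded sets (the definition of the mixed topology, cf.\ \cref{basicprops}). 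Both routes are sound; the paper's is shorter once the locally convex semigroup machinery (in particular sequential completeness of $(\Cb(X),\tau_{\mathrm m})$, i.e.\ compact generation of $X$, needed for the vector-valued integral) is taken for granted, while yours is more self-contained and makes transparent exactly which properties of the mixed topology enter---indeed it does not use Komura's integral calculus at all, only the joint continuity of $\varphi$ and the description of $\tau_{\mathrm m}$-convergence on bounded sets. Your compactness argument for the uniform convergence on $K$ and the observation that a net lying in a norm-bounded set and $\tau_{\mathrm c}$-converging to a limit in that set is automatically $\tau_{\mathrm m}$-convergent are both correct as stated.
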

\begin{proof}
	Clearly, (a) implies (b). By \cite[Proposition 1.2]{Komu1968} (a) holds if and only if 
	\begin{align*}
		T_\varphi(t)f - f = \int_0^t T_\varphi (s)g\dd s 
	\end{align*}
	for every $t > 0$. However, this is equivalent to 
	\begin{align*}
		f(\varphi_t(x)) - f(x) = \int_0^t g(\varphi_s(x))\dd s 
	\end{align*}
	for all $t > 0$ and $x \in X$. Now take $x \in X$. By (b) the function $f_{x} \colon \R_{\geq 0} \rightarrow \C, \, s \mapsto f(\varphi_s(x))$ is right-sided differentiable with derivative $g(\varphi_s(x))$ for $s \in \R_{\geq 0}$. Since the right-sided derivative is continuous, the function $f_{x}$ is actually continuously differentiable and we can apply the fundamental theorem of calculus to obtain the desired identity for every $t> 0$.
\end{proof}
In the simple example of the translation $\varphi$ on $X= \R$ discussed above, the generator of the induced Koopman semigroup is given by $\delta_\varphi f =f'$ where $\dom(\delta_\varphi) = \Cb^{1}(\R)$ is the space of all bounded continuously differentiable functions on $\R$ with bounded derivative.
\begin{remark}\label{bicont} 
	The generator $\delta_\varphi$ of a continuous semiflow $\varphi$ on $X$ coincides with the one defined by K\"uhnemund in the setting of bi-continuous semigroups, see \cite{Kueh2001,KuPhD}. As a consequence, $\delta_\varphi$ is closed with respect to the norm topology on $\Cb(X)$ and for $\nu>0$ the operator $\nu\Id-\delta_\varphi$ is invertible with 
	\begin{align*}\label{eq:bicontlaplace} 
		(\nu\Id-\delta_\varphi)^{-1}f=\int_0^{\infty}\ee^{-\nu t}T_\varphi(t)f\dd t\quad\text{for every $f\in \Cb(X)$} 
	\end{align*}
	where the integral is understood with respect to the compact-open topology. Moreover, $\delta_\varphi$ is a Hille--Yosida operator. See \cite{Kueh2001}, \cite{KuPhD}.
	
	A particularly useful construction for bi-continuous semigroups $\euT = (T(t))_{t \geq 0}$ on a Banach space $E$ with generator $\delta$ is the following: Define $E_0:=\overline{\dom(\delta)}$, then $E_0$ is invariant under the semigroup operators $T(t)$, whose restrictions $T_0(t):=T(t)|_{E_0}$ define a one-parameter semigroup on $E_0$ which is strongly continuous with respect to the norm $\|\cdot\|_E$. The generator of $(T_0(t))_{t\geq 0}$ is the part $\delta_0:=\delta|_{E_0}$ of $\delta$ in $E_0$. Moreover, $\dom(\delta^2)\subseteq E_0$, and $E_0$ consists precisely of those functions $f\in E$ for which $t\mapsto T(t)f$ is strongly continuous with respect to the norm. Using this idea, one can apply the rich theory for strongly continuous semigroups on Banach spaces as described in the book \cite{EN00} of Engel and Nagel.
	
	We refer to \cite{Kueh2001}, \cite{KuPhD} and \cite{BuFa2} for all these results. 
\end{remark}

\section{Algebraic and lattice theoretic characterizations} The space $\Cb(X)$ is not only a vector space but carries additional structure. Equipped with pointwise multiplication of functions it becomes an algebra. Likewise, the pointwise modulus turns it into a complex vector lattice. It is straightforward to check that the multiplication and the modulus map are continuous with respect to $\tau_{\mathrm{m}}$ (see \cite[Proposition II.2.1]{Coop1978} for continuity of multiplication). Koopman semigroups respect both the algebraic and the lattice theoretic structure of $\Cb(X)$. But even more is true: They can be characterized precisely as the semigroups compatible with these structures.
\begin{theorem}\label{char1} 
	Suppose that $X$ is compactly generated. For a strongly continuous semigroup $\EuScript{T}$ on $\Cb(X)$ the following assertions are equivalent. 
	\begin{enumerate}
		[(a)] 
		\item There is a continuous semiflow $\varphi$ on $X$ with $\euT = \euT_\varphi$. 
		\item $\euT$ is a semigroup of unital algebra homomorphisms. 
		\item $\euT$ is a semigroup of unital lattice homomorphisms. 
	\end{enumerate}
	If these equivalent assertions hold, then the semiflow $\varphi$ in (a) is unique. 
\end{theorem}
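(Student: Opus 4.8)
My plan is to run the cycle (a)$\Rightarrow$(b), (a)$\Rightarrow$(c), which is immediate, and to close it by proving both (b)$\Rightarrow$(a) and (c)$\Rightarrow$(a) via a single duality argument, dealing with uniqueness at the end. For the trivial implications: if $\euT=\euT_\varphi$, then $T_\varphi(t)(fg)=(fg)\circ\varphi_t=(f\circ\varphi_t)(g\circ\varphi_t)=T_\varphi(t)f\cdot T_\varphi(t)g$ and $T_\varphi(t)\car=\car$, giving (b), while $T_\varphi(t)|f|=|f|\circ\varphi_t=|f\circ\varphi_t|=|T_\varphi(t)f|$ gives (c).

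For the converse directions, fix $t\geq 0$. The key is that each $T(t)$ is a continuous linear operator on $(\Cb(X),\tau_{\mathrm{m}})$, so its adjoint $T(t)'$ maps $\Cb(X)'=\Me(X)$ into itself; hence for $x\in X$ the functional $\mu_{t,x}:=T(t)'\delta_x$ (with $\delta_x\in\Me(X)$ the Dirac measure at $x$) is again a regular Borel measure on $X$, characterised by $\int f\dd\mu_{t,x}=(T(t)f)(x)$ for all $f\in\Cb(X)$. Under (b), $\mu_{t,x}$ inherits multiplicativity and unitality, hence is a character of the commutative unital C$^*$-algebra $\Cb(X)$; in particular it is positive, and since $\int\car\dd\mu_{t,x}=1$ it is a Borel probability measure satisfying $\int f^2\dd\mu_{t,x}=\bigl(\int f\dd\mu_{t,x}\bigr)^2$ for every real $f\in\Cb(X)$, forcing each such $f$ to be $\mu_{t,x}$-almost everywhere constant. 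Under (c), $\mu_{t,x}$ is a unital lattice homomorphism $\Cb(X)\to\C$, hence positive (lattice homomorphisms are), again a Borel probability measure, now with $\int f\wedge g\dd\mu_{t,x}=\bigl(\int f\dd\mu_{t,x}\bigr)\wedge\bigl(\int g\dd\mu_{t,x}\bigr)$ for real $f,g$. In both cases I would then argue that $\mu_{t,x}$ is a point mass: were its ``support'' to contain two distinct points, complete regularity of $X$ provides two functions in $\Cb(X)$ that are nonvanishing near those points and either mutually disjoint (lattice case) or whose product has strictly smaller integral than multiplicativity predicts (algebra case) --- a contradiction; inner regularity of $\mu_{t,x}$ makes this rigorous by reduction to compact subsets. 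Thus $\mu_{t,x}=\delta_{\varphi_t(x)}$ for a point $\varphi_t(x)\in X$, unique because $\Cb(X)$ separates the points of the completely regular space $X$.

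This produces maps $\varphi_t\colon X\to X$ with $T(t)f=f\circ\varphi_t$ for all $f\in\Cb(X)$. Since then $f\circ\varphi_t\in\Cb(X)$ for every $f$ and $X$ carries the initial topology induced by $\Cb(X)$, each $\varphi_t$ is continuous; from $T(0)=\Id$ and $T(s+t)=T(s)T(t)$ one gets $f\circ\varphi_0=f$ and $f\circ\varphi_{s+t}=(f\circ\varphi_t)\circ\varphi_s$ for all $f\in\Cb(X)$, and point separation yields $\varphi_0=\mathrm{id}_X$ and $\varphi_{s+t}=\varphi_s\circ\varphi_t$. Hence $\varphi=(\varphi_t)_{t\geq 0}$ is a semiflow with $\euT=\euT_\varphi$; and since $\euT_\varphi=\euT$ is strongly continuous and $X$ is compactly generated, \cref{prop:charcont}(ii) upgrades it to a \emph{jointly} continuous semiflow, giving (a). Uniqueness is the same argument: $\euT_\psi=\euT_\varphi$ forces $f\circ\psi_t=f\circ\varphi_t$ for all $f\in\Cb(X)$ and $t\geq 0$, hence $\psi_t=\varphi_t$.

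The step I expect to be the crux is confirming that $\mu_{t,x}$ is a point mass \emph{supported in $X$}. As a mere multiplicative (or lattice) functional on $\Cb(X)$ it is only a character of the C$^*$-algebra $\Cb(X)$, that is, evaluation at a point of the \v Cech--Stone compactification $\beta X$; what pins this point down inside $X$ is precisely the continuity of $T(t)$ for the mixed topology, so that $T(t)'$ preserves $\Me(X)=\Cb(X)'$ and not just the larger dual $\Ce(\beta X)'$. An equivalent organisation of the same idea would be to extend $T(t)$ to a unital algebra (resp.\ lattice) endomorphism of $\Ce(\beta X)\cong\Cb(X)$, invoke the compact-space classification of such endomorphisms from \cite[Chapter 4]{EFHN2015} to realise it as a composition operator over a continuous self-map of $\beta X$, and then use $\tau_{\mathrm{m}}$-continuity to check that this self-map leaves $X$ invariant.
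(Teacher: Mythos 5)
Your argument is correct and follows the paper's overall route: reduce everything to a fixed-time statement (each unital algebra resp.\ lattice homomorphism $T(t)\in\LLL(\Cb(X))$ is a composition operator $T_{\varphi_t}$ with $\varphi_t\colon X\to X$ continuous), assemble the semiflow from the semigroup law and point separation, and invoke \cref{prop:charcont}(ii) together with compact generation of $X$ to upgrade $\varphi$ to a jointly continuous semiflow; uniqueness is the same separation argument as in the paper. The difference is how the fixed-time step is established. The paper's \cref{lemma:char} quotes Cooper's identification of the $\tau_{\mathrm{m}}$-continuous unital characters of $\Cb(X)$ with the point evaluations (so the adjoint $T'$ restricts to a self-map of $X$), and reduces the lattice case to the algebraic one via the coincidence of scalar unital lattice and algebra homomorphisms on a commutative C$^*$-algebra. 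You instead re-derive this by hand: $\tau_{\mathrm{m}}$-continuity puts $\mu_{t,x}=T(t)'\delta_x$ into $\Me(X)$, and a unital multiplicative (resp.\ lattice-homomorphic) regular measure is shown directly to be a Dirac mass at a point of $X$, treating the lattice case on its own. This makes the crux---that the character lives over a point of $X$ rather than of $\beta X$---explicit and self-contained, at the cost of reproving what the citation supplies.

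One formulation to tighten: on a general completely regular space the support of $\mu_{t,x}$ need not carry full mass, so the \enquote{two distinct points in the support} phrasing is shaky as stated. The clean version of your Dirac-mass step is: with $c=\int f\dd\mu_{t,x}$ for real $f\in\Cb(X)$, multiplicativity gives $\int (f-c)^2\dd\mu_{t,x}=0$ and the lattice property gives $\int|f-c|\dd\mu_{t,x}=\bigl|\int(f-c)\dd\mu_{t,x}\bigr|=0$, so in either case every real $f\in\Cb(X)$ is $\mu_{t,x}$-a.e.\ constant. By tightness choose a compact $K\subseteq X$ with $\mu_{t,x}(K)>0$; the compact sets $\{f=c\}\cap K$ then have the finite intersection property (each has measure $\mu_{t,x}(K)$), and any point $y$ of their intersection satisfies $f(y)=\int f\dd\mu_{t,x}$ for all $f\in\Cb(X)$, which is exactly the identity $T(t)f(x)=f(y)$ you need to define $\varphi_t(x)\defeq y$. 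This repairs the formulation, not the idea.
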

Recall here that an operator $T \in \LLL(\Cb(X))$ is 
\begin{itemize}
	\item an \emph{algebra homomorphism} if $T(fg) = (Tf)(Tg)$ for all $f,g \in \Cb(X)$. 
	\item a \emph{lattice homomorphism} if $|Tf| = T|f|$ for all $f \in \Cb(X)$. 
	\item \emph{unital} if $T\mathbbm{1} = \mathbbm{1}$ where $\mathbbm{1} \in \Cb(X)$ is the constant $1$ function. 
\end{itemize}

We remark that the algebraic characterization is also contained in \cite[Section 3]{DoNe1993} (for metric $X$ and up to the oversight that the operators need to be unital). The result follows directly from the following theorem combined with \cref{prop:charcont}.
\begin{theorem}\label{lemma:char} 
	For $T \in \LLL(\Cb(X))$ the following assertions are equivalent. 
	\begin{enumerate}
		[(a)] 
		\item There is a continuous map $\psi \colon X \rightarrow X$ with $T= T_\psi$. 
		\item $T$ is a unital algebra homomorphism. 
		\item $T$ is a unital lattice homomorphism. 
	\end{enumerate}
	If these equivalent assertions hold, then the map $\psi$ in (a) is unique. 
\end{theorem}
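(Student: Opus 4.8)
The plan is to prove $(a)\Rightarrow(b)$ and $(a)\Rightarrow(c)$ directly, and both converses by a single scheme resting on the identification $\Cb(X)'=\Me(X)$; the uniqueness of $\psi$ will then follow at once, because $\Cb(X)$ separates the points of the completely regular space $X$. For the easy direction, if $T=T_\psi$ with $\psi$ continuous, then $T$ maps $\Cb(X)$ into itself and is $\tau_{\mathrm m}$-continuous (exactly as in the proof of \cref{prop:charcont}, using $p_K(f\circ\psi)=p_{\psi(K)}(f)$ and the compactness of $\psi(K)$), while $T_\psi(fg)=(T_\psi f)(T_\psi g)$, $T_\psi|f|=|T_\psi f|$ and $T_\psi\car=\car$ are immediate pointwise identities.

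For the converses, fix $x\in X$ and put $\mu_x\defeq T'\delta_x$, so that $\mu_x(f)=(Tf)(x)$ for every $f\in\Cb(X)$. Since $T$ is $\tau_{\mathrm m}$-continuous, its adjoint maps $\Cb(X)'$ into $\Cb(X)'$, so $\mu_x\in\Me(X)$, and $\mu_x(\car)=1$. If $(b)$ holds, then $\mu_x$ is a character of the commutative $C^*$-algebra $\Cb(X)$ and hence positive (as usual: applying $\mu_x$ to $\ee^{\mathrm{i}tf}$ for real $f$ and $t\in\R$ gives $\mu_x(f)\in\R$, whence $\mu_x(g^2)=\mu_x(g)^2\ge0$ for real $g$). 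If $(c)$ holds, then $\mu_x$ is a lattice homomorphism onto $\C$, since $\mu_x(|f|)=(T|f|)(x)=|Tf|(x)=|\mu_x(f)|$, and in particular positive. In either case $\mu_x$ is a regular Borel probability measure on $X$ satisfying $\int fg\dd\mu_x=\int f\dd\mu_x\int g\dd\mu_x$ for all $f,g\in\Cb(X)$ in case $(b)$, respectively $\int|f|\dd\mu_x=\bigl|\int f\dd\mu_x\bigr|$ for all $f\in\Cb(X)$ in case $(c)$.

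The heart of the proof is the claim that such a $\mu_x$ is a Dirac measure. I would argue by contradiction: if $\mu_x$ is not a point mass, a standard argument using its inner regularity produces two disjoint compact sets $K_0,K_1\subseteq X$ with $\mu_x(K_0),\mu_x(K_1)>0$, and complete regularity of $X$ yields a continuous $f\colon X\to[0,1]$ with $f\equiv0$ on $K_0$ and $f\equiv1$ on $K_1$. In case $(b)$, with $a\defeq\int f\dd\mu_x$ (so $0<\mu_x(K_1)\le a\le1$) one gets $\int(f-a\car)^2\dd\mu_x=a^2-2a^2+a^2=0$, so $f=a$ holds $\mu_x$-almost everywhere; since $a\ne0$ this contradicts $f\equiv0$ on the positive-measure set $K_0$. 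In case $(c)$ one takes instead $f\colon X\to[-1,1]$ continuous with $f\equiv-1$ on $K_0$ and $f\equiv1$ on $K_1$, and then $\int|f|\dd\mu_x=\bigl|\int f\dd\mu_x\bigr|$ forces $\int f^+\dd\mu_x=0$ or $\int f^-\dd\mu_x=0$, against $\int f^+\dd\mu_x\ge\mu_x(K_1)>0$ and $\int f^-\dd\mu_x\ge\mu_x(K_0)>0$. Hence $\mu_x=\delta_{\psi(x)}$ for a point $\psi(x)\in X$, unique since $\Cb(X)$ separates points. This defines a map $\psi\colon X\to X$ with $(Tf)(x)=f(\psi(x))$ for all $f\in\Cb(X)$ and $x\in X$, i.e.\ $T=T_\psi$; since $X$ carries the initial topology induced by $\Cb(X)$ and $f\circ\psi=Tf\in\Cb(X)$ is continuous for every $f$, the map $\psi$ is continuous. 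Finally, $T=T_{\psi'}$ with $\psi'$ continuous forces $f(\psi(x))=f(\psi'(x))$ for all $f$ and $x$, hence $\psi=\psi'$.

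The main obstacle is the Dirac-measure claim, and within it two points deserve care: the positivity of $\mu_x$ in case $(b)$ — the only place where the $C^*$-structure of $\Cb(X)$ is used — and the step from ``$\mu_x$ is not a point mass'' to ``there are two disjoint compact sets of positive measure'', which is precisely where the regularity of the measures in $\Me(X)=\Cb(X)'$ (and the complete regularity of $X$) are indispensable. Equivalently, it is the choice of the topology $\tau_{\mathrm m}$ (which guarantees $\Cb(X)'=\Me(X)$) that keeps $\psi(x)$ inside $X$: for a merely norm-continuous $T$ the same argument would only produce a continuous $\psi\colon X\to\beta X$ into the \v Cech--Stone compactification, recovering the classical Gelfand/Kakutani description of $\Ce(\beta X)\cong\Cb(X)$ used in the compact case in \cite{EFHN2015}.
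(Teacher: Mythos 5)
Your proposal is correct, and its skeleton coincides with the paper's: both proofs recover $\psi$ by letting the adjoint $T'$ act on point evaluations, get continuity of $\psi$ from the fact that the completely regular space $X$ carries the initial topology with respect to $\Cb(X)$, and get uniqueness from point separation. The difference lies in how the crucial lemma --- that every $\tau_{\mathrm{m}}$-continuous unital algebra (or lattice) homomorphism $\Cb(X)\rightarrow\C$ is a point evaluation at a point of $X$ --- is established. The paper simply quotes Cooper's Gelfand-type result identifying $X$ homeomorphically with the $\tau_{\mathrm{m}}$-spectrum of $\Cb(X)$ (and reduces the lattice case to the algebra case via the fact that unital lattice and algebra homomorphisms into $\C$ coincide on commutative C$^*$-algebras), whereas you prove the lemma from scratch through the duality $\Cb(X)'=\Me(X)$: the functional $f\mapsto (Tf)(x)$ is represented by a regular Borel probability measure, and multiplicativity (resp.\ the modulus identity) forces it to be a Dirac measure, the lattice case being treated directly rather than reduced to the algebraic one. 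What your route buys is a self-contained argument that makes visible exactly where tightness of the representing measures (i.e.\ the choice of $\tau_{\mathrm{m}}$) keeps $\psi(x)$ in $X$ rather than in $\beta X$; what it costs is that two standard steps are only gestured at and should be spelled out or referenced if written up: (i) that a functional in $\Me(X)$ which is positive on positive functions corresponds to a positive measure (an outer/inner regularity plus complete regularity argument, since the identification in the paper is only stated as a vector space isomorphism), and (ii) the step from ``$\mu_x$ is not a Dirac measure'' to ``there exist two disjoint compact sets of positive measure'', which hides the fact that a $\{0,1\}$-valued tight Borel probability measure on a Tychonoff space is a point mass --- true, but precisely the place where regularity is indispensable and where the argument would fail for measures on $\beta X$. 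Neither point is a genuine gap; both are routine for Radon measures.
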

\begin{proof}
	It is clear that (a) implies (b) and (c). The converse implications follows from \cite[Proposition II.2.3]{Coop1978} but for the reader's convenience we sketch the proof which is based on the following key observation: We can recover the space $X$ from the spectrum of $\Cb(X)$, i.e., the space $\mathcal{X}$ of all continuous unital algebra homomorphisms $\Cb(X) \rightarrow \C$ on $\Cb(X)$. More precisely, we obtain from \cite[Proposition 2.2]{Coop1978} that 
	\begin{align*}
		i \colon X \rightarrow \mathcal{X}, \quad x \mapsto i_x 
	\end{align*}
	with $i_x(f) \defeq f(x)$ for every $f \in \Cb(X)$ is a homeomorphism. Since the unital complex-valued lattice homomorphisms and the unital complex-valued algebra homomorphisms agree on any unital commutative C$^*$-algebra, we can also characterize $\mathcal{X}$ as the set of all unital lattice homomorphisms $\Cb(X) \rightarrow \C$. Thus, if $T \in \LLL(\Cb(X))$ is a unital algebra or lattice homomorphism, its adjoint $T'\colon \Cb(X) \rightarrow \Cb(X)$ restricts to a weak* continuous map $\mathcal{X} \rightarrow \mathcal{X}$. Via $i$ this induces a map $\varphi \colon X \rightarrow X$ with $T= T_\varphi$. Uniqueness of $\varphi$ is clear since $\Cb(X)$ separates the points of $X$. 
\end{proof}
\begin{remark}
	In view of the arguments used in the proof of \cref{lemma:char} and the classical Gelfand--Naimark and Kakutani theorems (see, e.g., \cite[Section 1.4]{Dixm1977} and \cite[Section II.7]{Scha1974}) representing every unital commutative C$^*$-algebra  or AM-space as the space of continuous functions on its (then compact) spectrum, the question arises whether one can also give an abstract characterization of the spaces $\Cb(X)$ for completely regular $X$. In fact, there is such a result due to Cooper showing that these are (up to some technicalities) the prototypes of so-called unital commutative Saks-C$^*$-algebras, i.e., unital commutative C$^*$-algebras with a suitable additional locally convex topology, see \cite[Proposition II.2.6 and Appendix A.2]{Coop1978} for a precise category theoretic equivalence. Loosely speaking, this result shows that no essential information is lost when passing to the Koopman linearization. 
\end{remark}

In the case of semiflows on compact spaces additional algebraic and lattice theoretic characterizations of Koopman semigroups are available in terms of their generators. The lattice theoretic one is due to Arendt (see \cite{Aren1982}) and inspired by the classical Kato inequality for the Laplace operator which is related to positivity of semigroups (see \cite{Kato1973}, \cite{ReSi1975}, \cite{NaUh1981}, \cite{Aren1984} and \cite{ArTe2019}). One makes use of the observation that for fixed $w,z \in \C$ the function 
\begin{align*}
	[0,\infty) \rightarrow \C, \quad t \mapsto |w+tz| 
\end{align*}
is right-sided differentiable at $0$ with derivative $\Re \hsign(\overline{w})(z)$ (see \cite[Lemma B-II.2.4]{PosOp1986}) where 
\begin{align*}
	\hsign(w)(z) \defeq 
	\begin{cases}
		(\sign\, w) \cdot z & \textrm{ if } w\neq 0,\\
		|z| & \textrm{ if } w = 0, 
	\end{cases}
\end{align*}
and $\sign$ is the usual complex sign function, i.e., 
\begin{align*}
	\sign(z) \coloneqq 
	\begin{cases}
		\frac{z}{|z|} &\textrm{ if } z \neq 0,\\
		0 & \textrm{ if } z =0. 
	\end{cases}
\end{align*}

We now generalize the algebraic and lattice theoretic characterizations of Koopman semigroups to the case of compactly generated completely regular spaces (see \cite[Theorems B-II.2.5 and B-II.3.4]{PosOp1986} for the case of compact $X$). To do so, we first recall that given a linear operator $\delta$ on a locally convex space $E$ we can always consider the \emph{adjoint linear relation} $\delta' \subseteq E' \times E'$, i.e., $(f',g') \in \delta'$ if 
\begin{align*}
	\langle \delta f , f' \rangle = \langle f, g'\rangle \textrm{ for every } f \in \dom(\delta).
\end{align*}
If $\delta$ is densely defined, this can be identified with a linear map defined on a subspace $\dom(\delta')$ of $E'$. Moreover, if $\delta$ is the generator of a strongly continuous semigroup $\euT = (T(t))_{t \geq 0}$ on a sequentially complete locally convex space $E$, then $\delta'$ is precisely the generator of the semigroup $\euT' = (T(t)')_{t \geq 0}$ of adjoint operators which is continuous with respect to the weak$^*$ topology on $E'$, see \cite[Prop. 2.1]{Komu1968}.
\begin{theorem}\label{char2} 
	Assume that $X$ is compactly generated. For the generator $\delta$ of a strongly continuous and locally equicontinuous semigroup on $\Cb(X)$ the following assertions are equivalent. 
	\begin{enumerate}
		[(a)] 
		\item There is a continuous semiflow $\varphi$ on $X$ with $\delta = \delta_\varphi$. 
		\item $\delta$ is unital and a derivation. 
		\item $\delta$ is unital and satisfies Kato's equality. 
	\end{enumerate}
	If these equivalent assertions hold, then the semiflow $\varphi$ in (a) is unique. 
\end{theorem}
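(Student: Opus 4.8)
The plan is to derive the two substantial implications from \cref{char1} and to prove the easy ones by unwinding the description of $\delta_\varphi$ as the Lie generator of $\varphi$ obtained above. More precisely, for (b)$\Rightarrow$(a) and (c)$\Rightarrow$(a) I would show that the strongly continuous, locally equicontinuous semigroup $\euT=(T(t))_{t\ge 0}$ generated by $\delta$ consists of unital algebra homomorphisms (resp.\ unital lattice homomorphisms) and then invoke \cref{char1}, which also yields uniqueness of $\varphi$. For (a)$\Rightarrow$(b): writing $\delta=\delta_\varphi$, the identity $T_\varphi(t)\mathbbm 1=\mathbbm 1$ gives $\mathbbm 1\in\dom(\delta_\varphi)$ and $\delta_\varphi\mathbbm 1=0$, and for $f,g\in\dom(\delta_\varphi)$ the Lie generator characterization combined with the product rule for one-sided derivatives (using $f(\varphi_t(x))\to f(x)$ as $t\to 0$) shows that $s\mapsto (fg)(\varphi_s(x))$ is right-differentiable at $0$ with derivative $f(x)(\delta_\varphi g)(x)+g(x)(\delta_\varphi f)(x)$; since $f\,\delta_\varphi g+g\,\delta_\varphi f\in\Cb(X)$, the characterization gives $fg\in\dom(\delta_\varphi)$ with $\delta_\varphi(fg)=f\,\delta_\varphi g+g\,\delta_\varphi f$, i.e.\ $\delta_\varphi$ is a unital derivation.

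For (a)$\Rightarrow$(c) one has to be careful: already for the translation semigroup on $\R$ the modulus of a function in $\dom(\delta_\varphi)=\Cb^1(\R)$ need not lie in $\dom(\delta_\varphi)$, so Kato's equality has to be read in its dual form, i.e.\ $\langle\Re(\hsign(\bar f)\,\delta f),\mu\rangle=\langle|f|,\delta'\mu\rangle$ for all $f\in\dom(\delta)$ and all $\mu\in\dom(\delta')$, where (by \cite[Prop.~2.1]{Komu1968}) $\delta'$ generates the weak$^*$-continuous adjoint semigroup $\euT'=(T(t)')_{t\ge 0}$ on $\Cb(X)'=\Me(X)$. For $\mu\in\dom(\delta_\varphi')$, using that each $T_\varphi(t)$ is a lattice homomorphism and $T_\varphi(t)|f|=|f\circ\varphi_t|$,
\begin{align*}
	\langle|f|,\delta_\varphi'\mu\rangle=\lim_{t\to 0^+}\frac{\langle T_\varphi(t)|f|-|f|,\mu\rangle}{t}=\lim_{t\to 0^+}\int\frac{|f(\varphi_t(x))|-|f(x)|}{t}\dd\mu(x);
\end{align*}
by the Lie generator characterization $f(\varphi_t(x))=f(x)+t(\delta_\varphi f)(x)+o(t)$ pointwise, and since $w\mapsto|w|$ is $1$-Lipschitz and $t\mapsto|w+tz|$ is right-differentiable at $0$ with derivative $\Re\hsign(\bar w)(z)$, the integrand converges pointwise to $\Re(\hsign(\overline{f(x)})(\delta_\varphi f)(x))$ and stays bounded by $\sup_{s\le 1}\|T_\varphi(s)\delta_\varphi f\|_\infty$; dominated convergence then yields Kato's equality.

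For (b)$\Rightarrow$(a): since $\dom(\delta)$ is $\euT$-invariant and, by hypothesis, a subalgebra, for $f,g\in\dom(\delta)$ the curve $v(t)\defeq (T(t)f)(T(t)g)$ lies in $\dom(\delta)$ and is continuously differentiable (product rule, using that multiplication is continuous on norm-bounded sets and that $t\mapsto T(t)h$ is $\Ce^1$ with derivative $T(t)\delta h$ for $h\in\dom(\delta)$) with $v'(t)=(T(t)\delta f)(T(t)g)+(T(t)f)(T(t)\delta g)$, which by the derivation property equals $\delta v(t)$. Hence, for fixed $s\ge 0$, the curve $\Phi(t)\defeq T(s-t)v(t)$ on $[0,s]$ satisfies $\Phi'(t)=T(s-t)\big(v'(t)-\delta v(t)\big)=0$; since $\Me(X)$ separates the points of $\Cb(X)$, $\Phi$ is constant and so $T(s)(fg)=\Phi(0)=\Phi(s)=(T(s)f)(T(s)g)$. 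Approximating arbitrary $f,g\in\Cb(X)$ in the mixed topology by norm-bounded nets in $\dom(\delta)$ (e.g.\ $\nu(\nu\Id-\delta)^{-1}f$ as $\nu\to\infty$, which is legitimate because local equicontinuity bounds $\sup_{t\le t_0}\|T(t)\|$) and using continuity of multiplication and of $T(s)$ on norm-bounded sets, every $T(s)$ is an algebra homomorphism; it is unital because $\delta\mathbbm 1=0$ forces $T(t)\mathbbm 1=\mathbbm 1$. By \cref{char1} there is a (unique) continuous semiflow $\varphi$ with $\euT=\euT_\varphi$, whence $\delta=\delta_\varphi$.

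For (c)$\Rightarrow$(a) I would argue analogously, now aiming at lattice homomorphisms. Fix $s\ge 0$, $f\in\dom(\delta)$, $\mu\in\dom(\delta')$, set $\mu_t\defeq T(s-t)'\mu\in\dom(\delta')$ and $\Phi_\mu(t)\defeq\langle|T(t)f|,\mu_t\rangle$ for $t\in[0,s]$. Writing $G_h\defeq\tfrac1h(|T(t+h)f|-|T(t)f|)$ one has the decomposition
\begin{align*}
	\frac{\Phi_\mu(t+h)-\Phi_\mu(t)}{h}=\langle G_h,\mu_t\rangle+\big\langle|T(t)f|,\tfrac1h(\mu_{t+h}-\mu_t)\big\rangle+\langle G_h,\mu_{t+h}-\mu_t\rangle.
\end{align*}
Here $G_h$ converges pointwise and boundedly to $\Re(\hsign(\overline{T(t)f})\,\delta T(t)f)$ (as in (a)$\Rightarrow$(c), since $T(t)f\in\dom(\delta)$ and $\delta T(t)f=T(t)\delta f$), so the first term tends to $\langle\Re(\hsign(\overline{T(t)f})\,\delta T(t)f),\mu_t\rangle$ by dominated convergence; the second term tends to $-\langle|T(t)f|,\delta'\mu_t\rangle$, which by Kato's equality (applied to $T(t)f\in\dom(\delta)$ and $\mu_t\in\dom(\delta')$) cancels the first; and the third term equals $h\langle G_h,\tfrac1h(\mu_{t+h}-\mu_t)\rangle$, which tends to $0$ because $\|G_h\|_\infty$ stays bounded while $\tfrac1h(\mu_{t+h}-\mu_t)$ is weak$^*$-convergent, hence bounded in total variation. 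Thus $\Phi_\mu$ has vanishing right derivative on $[0,s)$; being continuous it is constant, so $\langle|T(s)f|,\mu\rangle=\langle T(s)|f|,\mu\rangle$ for all $\mu\in\dom(\delta')$ and hence, by weak$^*$-density, for all $\mu\in\Me(X)$. Extending to all $f\in\Cb(X)$ as before, each $T(s)$ is a unital lattice homomorphism, and \cref{char1} again yields the (unique) semiflow $\varphi$ with $\delta=\delta_\varphi$. The step I expect to be the main obstacle is precisely this limit computation in (c)$\Rightarrow$(a): neither is $t\mapsto|T(t)f|$ differentiable in $\Cb(X)$, nor is the adjoint semigroup norm-continuous on $\Me(X)$, so $\Phi_\mu$ cannot be differentiated naively; the resolution is to isolate the cross term $\langle G_h,\mu_{t+h}-\mu_t\rangle$ and exploit that it carries an extra factor $h$ against a merely weak$^*$-bounded difference quotient, everything else reducing to dominated convergence together with Kato's equality (alternatively, the convexity of $t\mapsto|T(t)f(x)+tz|$ at each point $x$ permits a Dini-type argument).
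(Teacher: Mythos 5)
Your proposal is correct and, at the architectural level, follows the paper's plan: show that the semigroup generated by $\delta$ consists of unital algebra (resp.\ lattice) homomorphisms and invoke \cref{char1}, with the converse directions obtained by differentiating $t\mapsto T(t)(fg)$ resp.\ $t\mapsto\langle|T(t)f|,\mu\rangle$. The differences are in the technical devices, and they are genuine. For (a)$\Rightarrow$(b) and (a)$\Rightarrow$(c) you argue pointwise via the Lie-generator characterization plus dominated convergence, where the paper works at the level of the locally convex space through \cref{lemdiff1} (which itself rests on the product-rule \cref{prodrule}); your route is more elementary and perfectly adequate, since both $fg$ and $f\delta g+g\delta f$ lie in $\Cb(X)$ so the pointwise characterization applies. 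The most interesting divergence is in (c)$\Rightarrow$(a): the paper differentiates $t\mapsto\langle|T(t)f|,T(s-t)'\mu\rangle$ by pairing the \emph{moving} function with the measure difference quotient and then needs that $\sigma(\Me(X),\Cb(X))$-precompact sets are $\tau_{\mathrm m}$-equicontinuous (Cooper, Cor.~1.24, combined with Schaefer III.4.5), whereas you freeze the function, accept a cross term, and kill it using the extra factor $h$ against total-variation boundedness of the difference quotients; this trades the Saks-space equicontinuity result for the uniform boundedness principle in the Banach space $(\Cb(X),\|\cdot\|_\infty)$ together with the norm-boundedness of $\frac1h(T(h)-\Id)T(t)f$ (which the paper also uses, via \cref{basicprops}). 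Your handling of unitality ($\delta\mathbbm1=0$ forces $T(t)\mathbbm1=\mathbbm1$ by integrating the orbit) replaces the paper's \cref{lemunital}, and your explicit bounded resolvent approximation makes precise the paper's terse \enquote{by density of $\dom(\delta)$}.

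Three small points you should make explicit if you write this up. First, in the second term of your decomposition the increment $\mu_{t+h}=T(s-t-h)'\mu$ runs \emph{backwards} along the adjoint orbit, so you need the two-sided derivative of $\tau\mapsto\langle g,T(\tau)'\mu\rangle$ for fixed $g\in\Cb(X)$; this follows since the right derivative $\tau\mapsto\langle T(\tau)g,\delta'\mu\rangle$ exists and is continuous, but it is not automatic from the weak$^*$ generator property alone. Second, the claim \enquote{weak$^*$-convergent, hence bounded in total variation} should be justified by noting that every $\tau_{\mathrm m}$-continuous functional is $\|\cdot\|_\infty$-continuous, so Banach--Steinhaus in $(\Cb(X),\|\cdot\|_\infty)$ applies to the pointwise bounded family $\{\frac1h(\mu_{t+h}-\mu_t)\}$. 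Third, the continuity of $\Phi_\mu$ on $[0,s]$ (needed to conclude constancy from a vanishing right derivative) and the differentiability of $t\mapsto T(s-t)v(t)$ in (b)$\Rightarrow$(a) both require the local equicontinuity hypothesis exactly as in \cref{prodrule}; as stated they are asserted rather than proved, but they fill in without difficulty.
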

Here, we say that a densely defined operator $\delta$ on $\Cb(X)$ 
\begin{itemize}
	\item is a \emph{derivation} if $\dom(\delta)$ is a subalgebra of $\Cb(X)$ and 
	\begin{align*}
		\delta(f g) = \delta(f)g + f\delta(g) 
	\end{align*}
	for all $f,g \in \dom(\delta)$. 
	\item satisfies \emph{Kato's equality} if 
	\begin{align*}
		\langle \Re\, (\hsign(\overline{f}) (\delta f)), \mu \rangle = \langle |f|, \delta'\mu\rangle 
	\end{align*}
	for all$f \in \dom(\delta)$ and $\mu \in \dom(\delta')$. 
	\item is \emph{unital} if $\mathbbm{1} \in \ker(\delta)$.
	\end{itemize}

We remark once again that for complete metric spaces $X$ the algebraic characterization can be found in \cite{DoNe1993} and \cite{Kueh2001} with the unitality of the operator missing.

For the proof of \cref{char2} we need several preliminary results. The following technical lemma is a generalization of \cite[Lemma B.16]{EN00}. 

\begin{lemma}\label{prodrule} 
	Let $I \subseteq \R$ be an interval with non-empty interior, $E$ and $F$ topological vector spaces and $t_0 \in I$. Let further $H \colon I \rightarrow \LLL(E,F)$ be a strongly continuous and locally equicontinuous map and $h \colon I \rightarrow E$ continuous. If 
	\begin{itemize}
		\item $h$ is differentiable in $t_0$, and 
		\item $H \cdot h(t_0) \colon I \rightarrow E ,\, t \mapsto H(t)h(t_0)$ is differentiable in $t_0$,
	\end{itemize}
	then $H \cdot h \colon I \rightarrow E,\, t \mapsto H(t)h(t)$ is continuous on $I$ and differentiable in $t_0$ with derivative 
	\begin{align*}
		(H \cdot h)'(t_0) = H(t_0)h'(t_0) + (H \cdot h(t_0))'(t_0). 
	\end{align*}
	
	\end{lemma}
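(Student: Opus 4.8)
The plan is to establish both claims of the lemma — continuity of $H\cdot h$ on all of $I$ and differentiability at $t_0$ — by the same elementary device, namely by splitting a difference of the form $H(t)v(t)-H(s)v(s)$ into a term in which only the operator moves (to be controlled by strong continuity of $H$, or by the hypothesis on $t\mapsto H(t)h(t_0)$) and a term of the shape $H(t)\bigl(v(t)-v(s)\bigr)$ with $v(t)-v(s)\to 0$ in $E$ (to be controlled by local equicontinuity of $H$). Since $I\subseteq\R$ is metrizable, it suffices to test every limit along sequences, even though $F$ is merely a topological vector space; convergence in $F$ I would phrase throughout in terms of $0$-neighbourhoods.

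For continuity at a point $s\in I$, I would fix a compact neighbourhood $K\subseteq I$ of $s$ and write, for $t\in K$,
\[
	H(t)h(t)-H(s)h(s)=H(t)\bigl(h(t)-h(s)\bigr)+\bigl(H(t)h(s)-H(s)h(s)\bigr).
\]
Given a $0$-neighbourhood $V$ in $F$, equicontinuity of $H(K)$ yields a $0$-neighbourhood $U$ in $E$ with $H(t)U\subseteq V$ for all $t\in K$; continuity of $h$ puts $h(t)-h(s)$ into $U$ for $t$ close to $s$, so the first summand lies in $V$; and strong continuity of $H$ puts the second summand into $V$ for $t$ close to $s$. Hence the left-hand side lies in $V+V$, and since $V$ was arbitrary this gives continuity of $H\cdot h$ at $s$.

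For differentiability at $t_0$, I would write the difference quotient (for $t\in I$, $t\neq t_0$) as
\[
	\frac{H(t)h(t)-H(t_0)h(t_0)}{t-t_0}=H(t)\,\frac{h(t)-h(t_0)}{t-t_0}+\frac{H(t)h(t_0)-H(t_0)h(t_0)}{t-t_0},
\]
the second summand converging to $(H\cdot h(t_0))'(t_0)$ by hypothesis. In the first summand I would insert $h'(t_0)$:
\[
	H(t)\,\frac{h(t)-h(t_0)}{t-t_0}=H(t)h'(t_0)+H(t)\Bigl(\frac{h(t)-h(t_0)}{t-t_0}-h'(t_0)\Bigr),
\]
where $H(t)h'(t_0)\to H(t_0)h'(t_0)$ by strong continuity, while the error vectors $r(t)\defeq\frac{h(t)-h(t_0)}{t-t_0}-h'(t_0)$ converge to $0$ in $E$ by differentiability of $h$ at $t_0$. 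Restricting $t$ to a compact neighbourhood $K$ of $t_0$ and applying the equicontinuity of $H(K)$ exactly as in the continuity argument, $H(t)r(t)$ is eventually inside any prescribed $0$-neighbourhood of $F$. Summing the three contributions yields $(H\cdot h)'(t_0)=H(t_0)h'(t_0)+(H\cdot h(t_0))'(t_0)$.

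The one genuinely non-routine point — and the step I expect to be the crux — is the handling of the mixed term $H(t)r(t)$: strong continuity of $H$ alone is of no help here, since both the operator and its argument vary with $t$, and it is precisely the local equicontinuity of $H$ near $t_0$ that allows a null sequence in $E$ to be absorbed uniformly over the nearby operators. Everything else is the bilinear bookkeeping that already underlies \cite[Lemma B.16]{EN00}, merely rephrased with $0$-neighbourhoods in place of norms.
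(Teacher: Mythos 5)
Your proof is correct and follows essentially the same route as the paper: the identical splitting of the difference quotient into $H(t)\,\frac{h(t)-h(t_0)}{t-t_0}$ plus the term where only the operator varies, with local equicontinuity absorbing the null term $r(t)$. The only difference is cosmetic---the paper delegates that absorption step (and the continuity of $H\cdot h$) to the precompactness argument in \cite[Theorem III.4.5]{Scha1999}, whereas you spell it out directly with $0$-neighbourhoods.
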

\begin{proof}
	We may assume that $0=t_0 \in I$ and $I$ is compact. Since the set $h(I)$ is compact, it is an easy consequence of \cite[Theorem III.4.5]{Scha1999} that $H \cdot h$ is continuous. We write 
	\begin{align*}
		t^{-1}((H \cdot h)(t) - (H\cdot h)(0)) = H(t)t^{-1}(h(t) - h(0)) + t^{-1}(H(t) - H(0))h(0) 
	\end{align*}
	for $t \in I\setminus \{0\}$. Since $H$ is locally equicontinuous and the set $\{t^{-1}(h(t) - h(0))\mid t \in I\}$ is precompact, we conclude again from \cite[Theorem III.4.5]{Scha1999} that 
	\begin{align*}
		\lim_{t \rightarrow 0} H(t)t^{-1}(h(t) - h(0)) = H(0) h'(0). 
	\end{align*}
	On the other hand, 
	\begin{align*}
		\lim_{t \rightarrow 0} t^{-1}(H(t) - H(0))h(0) = (H \cdot h(0))'(0).
	\end{align*}
\end{proof}
\begin{lemma}\label{lemdiff1} 
	Let $\delta$ be the generator of a strongly continuous and locally equicontinuous semigroup $\euT = (T(t))_{t \geq 0}$ on $\Cb(X)$. 
	\begin{enumerate}
		[(i)] 
		\item For all $f,g \in \dom(\delta)$ the map 
		\begin{align*}
			 \R_{\geq 0} \rightarrow \Cb(X), \quad t \mapsto (T(t)f) \cdot (T(t)g) 
		\end{align*}
		is differentiable with derivative $(T(t)f) \cdot (\delta T(t)g) + (\delta T(t) f) \cdot (T(t)g)$ for $t \in \R_{\geq 0}$. 
		
		\item For all $f \in \dom(\delta)$ and $\mu \in \Me(X)$ the map 
		\begin{align*}
			 \R_{\geq 0} \rightarrow \C, \quad t \mapsto \langle |T(t)f|,\mu \rangle 
		\end{align*}
		is right-sided differentiable with derivative $\langle \Re\,(\hsign(\overline{T(t)f})(\delta T(t) f)), \mu \rangle$ for $t \in \R_{\geq 0}$. 
	\end{enumerate}
\end{lemma}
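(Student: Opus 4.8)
\emph{Proof proposal.} The plan is to derive (i) from the abstract product rule \cref{prodrule} applied to multiplication operators, and to obtain (ii) by differentiating pointwise and then invoking dominated convergence for the finite measure $|\mu|$.

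\textbf{Part (i).} For $u\in\Cb(X)$ let $M_u\colon\Cb(X)\to\Cb(X)$, $v\mapsto uv$, denote the multiplication operator. Since multiplication is jointly $\tau_{\mathrm m}$-continuous, each $M_u$ lies in $\LLL(\Cb(X))$, and for fixed $f\in\dom(\delta)$ the map $H\colon\R_{\geq 0}\to\LLL(\Cb(X))$, $t\mapsto M_{T(t)f}$, is strongly continuous (again by joint continuity of multiplication, since $t\mapsto T(t)f$ is $\tau_{\mathrm m}$-continuous). It is also locally equicontinuous: for compact $J\subseteq\R_{\geq 0}$ the set $\{T(t)f\mid t\in J\}$ is $\tau_{\mathrm m}$-compact, hence norm-bounded by some $C_J$, and the estimate $p_K(M_uv)=\sup_{x\in K}|u(x)v(x)|\leq\|u\|_\infty p_K(v)$ for every compact $K\subseteq X$ together with the equicontinuity criterion used in the proof of \cref{prop:charcont} shows that $\{M_u\mid\|u\|_\infty\leq C_J\}$, and in particular $\{H(t)\mid t\in J\}$, is $\tau_{\mathrm m}$-equicontinuous. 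Now fix $t_0\geq 0$ and set $h(t)\defeq T(t)g$. Then $h$ is $\tau_{\mathrm m}$-continuous and, since $g\in\dom(\delta)$, differentiable at $t_0$ with $h'(t_0)=\delta T(t_0)g$, while $t\mapsto H(t)h(t_0)=M_{T(t_0)g}(T(t)f)$ is differentiable at $t_0$ with derivative $M_{T(t_0)g}(\delta T(t_0)f)=(T(t_0)g)\cdot(\delta T(t_0)f)$, being the image of the differentiable curve $t\mapsto T(t)f$ under the fixed continuous operator $M_{T(t_0)g}$. Thus \cref{prodrule} applies and yields that $t\mapsto H(t)h(t)=(T(t)f)\cdot(T(t)g)$ is differentiable at $t_0$ with derivative $(T(t_0)f)\cdot(\delta T(t_0)g)+(\delta T(t_0)f)\cdot(T(t_0)g)$. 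Since $t_0$ was arbitrary, (i) follows.

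\textbf{Part (ii).} Fix $t\geq 0$, put $u\defeq T(t)f$ and $v\defeq\delta T(t)f=T(t)\delta f$, and note $u\in\dom(\delta)$. For $s>0$ set $v_s\defeq s^{-1}(T(s)u-u)$, so that $T(t+s)f=T(s)u=u+sv_s$ and $v_s\to v$ in $\tau_{\mathrm m}$ as $s\downarrow 0$ by definition of the generator; hence, putting $v_0\defeq v$, the map $s\mapsto v_s$ is $\tau_{\mathrm m}$-continuous on the compact interval $[0,1]$, so its image is $\tau_{\mathrm m}$-compact, hence norm-bounded, and $C\defeq\sup_{s\in[0,1]}\|v_s\|_\infty<\infty$. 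Pointwise on $X$ one then has, for each $x\in X$,
\[ \frac{|T(s)u(x)|-|u(x)|}{s}=\frac{|u(x)+sv_s(x)|-|u(x)|}{s}\longrightarrow\Re\,\hsign(\overline{u(x)})(v(x))\quad(s\downarrow 0), \]
because this difference quotient differs from $s^{-1}(|u(x)+sv(x)|-|u(x)|)$ by at most $|v_s(x)-v(x)|\to 0$, and the latter quotient converges to $\Re\,\hsign(\overline{u(x)})(v(x))$ by the right-differentiability of $r\mapsto|w+rz|$ at $0$ recalled before \cref{char2}. Moreover all these difference quotients are dominated by the constant $C$, which is integrable for the finite measure $|\mu|$. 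In particular $\Re\,\hsign(\overline{u})(v)$ is a bounded, Borel measurable function (a pointwise limit of continuous ones), so $\langle\Re\,\hsign(\overline{u})(v),\mu\rangle\defeq\int\Re\,\hsign(\overline{u})(v)\dd\mu$ is well defined, and for every sequence $s_n\downarrow 0$ dominated convergence gives
\[ \frac{\langle|T(t+s_n)f|,\mu\rangle-\langle|T(t)f|,\mu\rangle}{s_n}=\int\frac{|T(s_n)u|-|u|}{s_n}\dd\mu\longrightarrow\langle\Re\,\hsign(\overline{u})(v),\mu\rangle\quad(n\to\infty). \]
Since this holds along every such sequence, $t\mapsto\langle|T(t)f|,\mu\rangle$ is right-differentiable at $t$ with the asserted derivative, and $t\geq 0$ was arbitrary.

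\textbf{Main obstacle.} The delicate point is (ii): since $\euT$ is only $\tau_{\mathrm m}$-strongly continuous and not norm-strongly continuous, the uniform norm bound on the difference quotients $v_s$ cannot come from an operator-norm estimate on $T(s)$ but must be extracted from the $\tau_{\mathrm m}$-compactness of the curve $s\mapsto v_s$; and one must keep in mind that $\Re\,\hsign(\overline{u})(v)$ need not be continuous, so the pairing with $\mu$ in the conclusion is genuinely an integral against the (regular, finite) measure $|\mu|$. Once these two points are dealt with and \cref{prodrule} is available, both parts are routine.
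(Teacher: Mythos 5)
Your proposal is correct and follows essentially the same route as the paper: part (i) is exactly the paper's application of \cref{prodrule} with $h(t)=T(t)g$ and $H(t)=M_{T(t)f}$ (you merely spell out the strong continuity and local equicontinuity of $H$ that the paper leaves implicit), and part (ii) is the paper's argument of pointwise right-differentiation followed by a uniform norm bound on the difference quotients and dominated convergence. The only cosmetic difference is that you verify the pointwise chain-rule step by a direct triangle-inequality estimate instead of citing \cite[Proposition B-II.2.3]{PosOp1986}, and you justify the dominating constant via $\tau_{\mathrm m}$-compactness of the curve of difference quotients, which is the substance behind the paper's reference to \cref{basicprops}.
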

\begin{proof}
	For (i) take $f,g \in \dom(\delta)$ and apply \cref{prodrule} to the maps 
	\begin{align*}
		h&\colon [0,\infty) \rightarrow \Ce_{\mathrm{b}}(X), \quad t \mapsto T(t)g,\\
		H&\colon [0,\infty) \rightarrow \LLL(\Ce_{\mathrm{b}}(X)), \quad t \mapsto M_{T(t)f} 
	\end{align*}
	with $M_{T(t)f}k \defeq (T(t)f) \cdot k$ for $k \in \Cb(X)$ and $t \in \R_{\geq 0}$. 
	
	Now consider (ii) and let $f \in \dom(\delta)$. For $x \in X$ we obtain that the map 
	\begin{align*}
		h \colon \R_{\geq 0} \rightarrow \C, \quad t \mapsto T(t)f(x) 
	\end{align*}
	is differentiable with derivative $(\delta T(t)f)(x)$ for $t \in \R_{\geq 0}$. Combining the right-dif\-fe\-ren\-ti\-a\-bi\-li\-ty of the modulus function discussed above with a chain rule (see \cite[Proposition B-II.2.3]{PosOp1986}) we conclude that 
	\begin{align*}
		[0,1] \rightarrow \C, \quad t \mapsto |T(t)f(x)| 
	\end{align*}
	is also right-sided differentiable with derivative $\Re\, (\hsign(\overline{T(t)f(x)}) (\delta T(t)f(x)))$ for $t \in \R_{\geq 0}$. Since 
	\begin{align*}
		\sup_{s \in (0,1]}\left\|\frac{|T(s+t)f|- |T(t)f|}{t}\right\|_\infty \leq \sup_{s \in (0,1]} \left\|\frac{T(s)T(t)f- T(t)f}{t}\right\|_\infty < \infty 
	\end{align*}
	for every $t \in \R_{\geq 0}$ (see \cref{basicprops} (iii)), we can apply Lebesgue's theorem to obtain (ii). 
\end{proof}
We now characterize semigroups of lattice homomorphisms on $\Cb(X)$ using similar arguments as in \cite[Theorem B-II.2.5]{PosOp1986}.
\begin{proposition}\label{kato} 
	For the generator $\delta$ of a strongly continuous and locally equicontinuous semigroup $\euT$ on $\Cb(X)$ the following assertions are equivalent. 
	\begin{enumerate}
		[(a)] 
		\item $\euT$ is a semigroup of lattice homomorphisms. 
		\item $\delta$ satisfies Kato's equality. 
	\end{enumerate}
\end{proposition}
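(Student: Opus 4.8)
The plan is to establish the two implications separately: $(a)\Rightarrow(b)$ is a short differentiation argument, and $(b)\Rightarrow(a)$ is the substantial part.

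For $(a)\Rightarrow(b)$ I would argue as follows. Assume every $T(t)$ is a lattice homomorphism, so $|T(t)f|=T(t)|f|$ for all $t\geq0$ and $f\in\Cb(X)$. Fix $f\in\dom(\delta)$ and $\mu\in\dom(\delta')$. Since $\delta'$ generates the weak$^{*}$-continuous adjoint semigroup $(T(t)')_{t\geq0}$, the map $t\mapsto\langle T(t)|f|,\mu\rangle=\langle|f|,T(t)'\mu\rangle$ is differentiable at $0$ with derivative $\langle|f|,\delta'\mu\rangle$. By \cref{lemdiff1}~(ii) the map $t\mapsto\langle|T(t)f|,\mu\rangle$ is right-differentiable at $0$ with right-derivative $\langle\Re(\hsign(\overline{f})(\delta f)),\mu\rangle$. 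These two maps coincide, hence so do their one-sided derivatives at $0$, and this is precisely Kato's equality.

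For $(b)\Rightarrow(a)$ the idea is to show, for each $f\in\dom(\delta)$, that $u\colon t\mapsto|T(t)f|$ coincides with the orbit $t\mapsto T(t)|f|$. Note $u$ is $\tau_{\mathrm m}$-continuous (as $t\mapsto T(t)f$ is and the modulus is $\tau_{\mathrm m}$-continuous) with $u(0)=|f|$. For $\mu\in\dom(\delta')$, \cref{lemdiff1}~(ii) applied at time $t$ (using $\delta T(t)f=T(t)\delta f\in\Cb(X)$) gives that $t\mapsto\langle u(t),\mu\rangle$ is right-differentiable with right-derivative $\langle\Re(\hsign(\overline{T(t)f})(\delta T(t)f)),\mu\rangle$, and Kato's equality applied to $T(t)f\in\dom(\delta)$ rewrites this as $\langle u(t),\delta'\mu\rangle$. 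Since this depends continuously on $t$, the scalar function $t\mapsto\langle u(t),\mu\rangle$ is in fact continuously differentiable with derivative $\langle u(t),\delta'\mu\rangle$; i.e.\ $u$ is a weak solution of the Cauchy problem governed by $\delta$ with initial value $|f|$. Using that $\Cb(X)$ is $\tau_{\mathrm m}$-complete ($X$ being compactly generated, \cref{basicprops}~(iv)), I would form $v(t):=\int_0^t u(s)\dd s\in\Cb(X)$; integrating the previous identity yields $\langle v(t),\delta'\mu\rangle=\langle u(t)-|f|,\mu\rangle$ for all $\mu\in\dom(\delta')$, so — recovering $\delta$ as the adjoint of $\delta'$ with respect to the duality between $\Cb(X)$ and $\Me(X)$ — one gets $v(t)\in\dom(\delta)$ with $\delta v(t)=u(t)-|f|$. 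Then $v$ is continuously differentiable with $v'(t)=u(t)=\delta v(t)+|f|$ and $v(0)=0$, so uniqueness for this inhomogeneous Cauchy problem forces $v(t)=\int_0^t T(s)|f|\dd s$ and hence $u(t)=v'(t)=T(t)|f|$. Finally, since $\dom(\delta)$ is $\tau_{\mathrm m}$-dense in $\Cb(X)$ (\cref{bicont}) and the modulus map and all $T(t)$ are $\tau_{\mathrm m}$-continuous, $|T(t)f|=T(t)|f|$ extends to all $f\in\Cb(X)$, so every $T(t)$ is a lattice homomorphism.

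The step I expect to be the main obstacle is the passage from ``$u$ is a weak solution'' to ``$u(t)=T(t)|f|$'': one needs that $\int_0^t u(s)\dd s\in\dom(\delta)$, i.e.\ that $\delta$ is exactly the adjoint of the weak$^{*}$-generator $\delta'$, plus uniqueness for the inhomogeneous problem, and these have to be secured for locally equicontinuous (equivalently, bi-continuous) semigroups on the non-barrelled space $(\Cb(X),\tau_{\mathrm m})$ — for which I would lean on \cite{Komu1968,Ouch1973,Kueh2001}. An alternative closer to \cite[Theorem B-II.2.5]{PosOp1986} is to use that lattice homomorphisms are positive, so $T(t)|f|-|T(t)f|\geq0$, and to pair with $\mu=(\nu-\delta')^{-1}\lambda$ for a positive measure $\lambda$ and $\nu$ beyond the growth bound; this converts the computation above into a Gronwall inequality $G'(t)\leq\nu G(t)$, $G(0)=0$, $G\geq0$, forcing $G\equiv0$, and letting $\nu\to\infty$ recovers $|T(t)f|=T(t)|f|$. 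Going this way instead shifts the difficulty to deriving positivity of $\euT$ (a positive-minimum-principle consequence of Kato's equality) and to the resolvent representation $(\nu\Id-\delta)^{-1}=\int_0^\infty\ee^{-\nu t}T(t)\dd t$ together with the Euler approximation of $T(t)$, where the new subtlety compared with the compact case is that $X$ is not compact, so suprema need not be attained and one must argue with resolvents rather than directly with the semigroup. A minor technical point throughout is that $\hsign$ and the indicator of the zero set of $f$ are only Borel, which is why Kato's equality is tested against $\mu\in\dom(\delta')$ only — these already separate $\Cb(X)$, being weak$^{*}$-dense.
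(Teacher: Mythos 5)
Your (a)$\Rightarrow$(b) direction is essentially the paper's: pair with $\mu\in\dom(\delta')$, differentiate $t\mapsto\langle T(t)|f|,\mu\rangle=\langle|f|,T(t)'\mu\rangle$ at $0$, and match the result with the right derivative supplied by \cref{lemdiff1}~(ii). For (b)$\Rightarrow$(a) you take a genuinely different route. The paper fixes $f\in\dom(\delta)$, $\mu\in\dom(\delta')$ and shows that $h_s(t)=\langle|T(t)f|,T(s-t)'\mu\rangle$ has right derivative zero, hence is constant, giving $\langle|T(s)f|,\mu\rangle=\langle T(s)|f|,\mu\rangle$; the only nontrivial ingredients are \cref{lemdiff1}~(ii), the fact that $\sigma(\Me(X),\Cb(X))$-precompact sets are equicontinuous (\cite[Corollary 1.24]{Coop1978}), and the density of $\dom(\delta)$ together with the weak$^*$-density of $\dom(\delta')$. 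You instead show that $u(t)=|T(t)f|$ is a weak solution of the Cauchy problem for $\delta$ with initial value $|f|$ and run a Ball-type ``weak solution $=$ mild solution'' argument: form $v(t)=\int_0^t u(s)\,\mathrm{d}s$, identify $\delta$ with the adjoint of $\delta'$ to get $v(t)\in\dom(\delta)$ with $\delta v(t)=u(t)-|f|$, and use uniqueness for the inhomogeneous problem (via \cref{prodrule}) to conclude $u(t)=T(t)|f|$. This is workable, and it even spares you the weak$^*$-density of $\dom(\delta')$, since the Hahn--Banach separation that yields $\delta=(\delta')'$ produces the required functionals by itself; but it costs more than the paper's argument: you need the $\Cb(X)$-valued integral to exist and the graph of $\delta$ to be $\tau_{\mathrm m}$-closed (so that, being convex, it is weakly closed for the $\langle\Cb(X),\Me(X)\rangle$ duality), i.e.\ you need sequential completeness of $(\Cb(X),\tau_{\mathrm m})$, which you purchase by assuming $X$ compactly generated --- a hypothesis that is not part of \cref{kato} as stated (it only enters later, in \cref{char2}). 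The step you yourself flag as the main obstacle is thus exactly the extra work your route incurs: closedness of $\delta$ (available from \cite{Komu1968} under sequential completeness) plus the bipolar identification $\delta=(\delta')'$ must be spelled out, after which your argument goes through on compactly generated $X$. Your alternative Gronwall/resolvent sketch is closer in spirit to \cite[Theorem B-II.2.5]{PosOp1986}, but, as you note, it shifts the burden to positivity and resolvent representations, machinery the paper's constancy trick deliberately avoids.
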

\begin{proof}
	If (a) holds and $f \in \dom(\delta)$ and $\mu \in \Me(X)$, we obtain that 
	\begin{align*}
		[0,1] \rightarrow \C, \quad t \mapsto \langle T(t)|f|, \mu \rangle =\langle |T(t)f|, \mu \rangle
	\end{align*}
	is right-sided differentiable in $0$ with derivative $\langle \Re\,(\hsign(\overline{f})(\delta f)), \mu \rangle$ by \cref{lemdiff1} (ii). However, if $\mu\in \dom(\delta)$, we obtain that 
	\begin{align*}
		\lim_{t \rightarrow 0} t^{-1} \langle |f|, T(t)'\mu - \mu\rangle = \langle |f|,\delta'\mu\rangle 
	\end{align*}
	and consequently $\langle |f|,\delta'\mu\rangle = \langle \Re\,(\hsign(\overline{f})(\delta f)), \mu \rangle$.
	
Now assume that (b) holds and pick $f \in \dom(\delta)$, $\mu \in \dom(\delta')$. We show that the map
	\begin{align*}
		h_s\colon [0,s+1) \rightarrow \C, \quad t \mapsto \langle |T(t)f|,T(s-t)'\mu\rangle
	\end{align*}
	is right-sided differentiable with derivative zero for every $s \geq 0$. This implies that $h_s$ is constant and therefore
		 \begin{align*}
		\langle |T(s)f|, \mu \rangle = h_s(0) = h_s(s) = \langle T(s)|f|, \mu \rangle 
	\end{align*}
	for every $s \geq0$.
	Since $\dom(\delta')$ is $\sigma(\Me(X),\Cb(X))$-dense in $\Me(X)$ (see \cite[Proposition 5.2]{Koma1964}) and $\dom(\delta)$ is dense in $\Ce_\mathrm{b}(X)$, we then obtain that $|T(s)f| = T(s)|f|$ for all $f \in \Cb(X)$ and $s \geq 0$.
	
	Now take $s >0$ and observe that---by suitably replacing $f$ and $\mu$---it suffices to show that $h_s$ is right-sided differentiable in $0$ with derivative zero. Since the set
		\begin{align*}
			 \left\{t^{-1}(T(s-t)'\mu - T(s)'\mu)\colon t \in (0,1]\right\} \subseteq \Cb(X)'
		\end{align*}
	is $\sigma(\Cb(X)',\Cb(X))$-precompact, it is equicontinuous by \cite[Corollary 1.24]{Coop1978}. Thus we obtain that
		\begin{align*}
			\lim_{t \rightarrow 0} \langle |T(t)f|,  t^{-1} (T(s-t)'\mu - T(s)'\mu) \rangle = - \langle |f|, \delta'T(s)'\mu \rangle
		\end{align*}
	by \cite[Theorem III.4.5]{Scha1999}. But then
		\begin{align*}
			t^{-1}\Bigl( \langle |T(t)f|, T(s-t)'\mu \rangle - \langle |f|,T(s)'\mu\rangle \Bigr)&= \langle |T(t)f|,  t^{-1} (T(s-t)'\mu - T(s)'\mu) \rangle \\
			&+ \langle t^{-1}(|T(t)f| - |f|),T(s)'\mu \rangle   
		\end{align*}
	converges to zero for $t \rightarrow 0$ by (b) and \cref{lemdiff1} (ii).
\end{proof}
We finally show that unitality of the semigroup operators and the generator are equivalent. This can be derived more generally for bi-continuous semigroups $(T(t))_{t\geq 0}$ by considering the restricted semigroup $(T_0(t))_{t\geq 0}$ which is strongly continuous with respect to the norm topology (cf. \cref{bicont}). 
\begin{lemma}\label{lemunital} 
	Let $\delta$ be the generator of a bi-continuous semigroup $\euT = (T(t))_{t \geq 0}$. Then
	\[ \ker(\delta)=\bigcap_{t\geq 0}\ker(\Id-T(t))=\bigcap_{t\geq 0}\ker(\Id-T_0(t)). \]

\end{lemma}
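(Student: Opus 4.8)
The plan is to prove the two equalities in turn, reducing the claim to the standard description of the kernel of a generator of a $C_0$-semigroup, applied to the restricted semigroup $(T_0(t))_{t\geq 0}$ on the Banach space $E_0=\overline{\dom(\delta)}$ discussed in \cref{bicont}.

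First I would treat $\ker(\delta)=\bigcap_{t\geq 0}\ker(\Id-T(t))$. The inclusion ``$\supseteq$'' is immediate: if $T(t)f=f$ for every $t\geq 0$, then the difference quotients $t^{-1}(T(t)f-f)$ are identically zero for $t>0$, so $f\in\dom(\delta)$ and $\delta f=0$. For ``$\subseteq$'', let $f\in\ker(\delta)$. Since $\dom(\delta)\subseteq E_0$ and $\delta f=0\in E_0$, the function $f$ lies in the domain of the part $\delta_0=\delta|_{E_0}$, which by \cref{bicont} is the generator of the $C_0$-semigroup $(T_0(t))_{t\geq 0}$, and $\delta_0 f=0$. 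The fundamental theorem of calculus for $C_0$-semigroups then gives $T(t)f-f=T_0(t)f-f=\int_0^t T_0(s)\delta_0 f\dd s=0$ for all $t\geq 0$.

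For the second equality I would use that $T_0(t)=T(t)|_{E_0}$, so $\ker(\Id-T_0(t))=\ker(\Id-T(t))\cap E_0$ for each $t\geq 0$ and hence $\bigcap_{t\geq 0}\ker(\Id-T_0(t))=\bigl(\bigcap_{t\geq 0}\ker(\Id-T(t))\bigr)\cap E_0$. By the first equality this equals $\ker(\delta)\cap E_0$, and since $\ker(\delta)\subseteq\dom(\delta)\subseteq E_0$ it equals $\ker(\delta)$, which is the desired conclusion. (Alternatively, every common fixed point $f$ of the operators $T(t)$ lies in $E_0$ because $t\mapsto T(t)f$ is constant, hence norm-continuous, and by \cref{bicont} $E_0$ is exactly the set of such $f$.)

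I do not expect a genuine obstacle here; the only point requiring a little care is the passage between $\delta$ and its part $\delta_0$ in $E_0$ — concretely, the observation that $\ker(\delta)$ is automatically contained in $\dom(\delta_0)$, which is what allows the classical identity $\ker(\delta_0)=\bigcap_{t\geq 0}\ker(\Id-T_0(t))$ on the Banach space $E_0$ to be transported back to $\delta$ on $\Cb(X)$. Everything else is formal, using only the properties of $(T_0(t))_{t\geq 0}$ recorded in \cref{bicont}.
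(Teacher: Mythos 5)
Your proof is correct and takes essentially the same route as the paper: both reduce the statement to the restricted $C_0$-semigroup $(T_0(t))_{t\geq 0}$ on $E_0=\overline{\dom(\delta)}$ and its generator $\delta_0$, the part of $\delta$ in $E_0$. The only differences are cosmetic — you observe $f\in E_0$ via $\dom(\delta)\subseteq E_0$ (the paper uses $\ker(\delta)\subseteq\dom(\delta^2)\subseteq E_0$) and you reprove the classical identity $\ker(\delta_0)=\bigcap_{t\geq 0}\ker(\Id-T_0(t))$ via $T_0(t)f-f=\int_0^t T_0(s)\delta_0 f\dd s$ instead of citing \cite[Corollary 4.3.8]{EN00}.
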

\begin{proof}
	The inclusions
	\[ \ker(\delta)\supseteq \bigcap_{t\geq 0}\ker(\Id-T(t))\supseteq\bigcap_{t\geq 0}\ker(\Id-T_0(t)) \]
	are trivial. If $f\in \ker(\delta)$, then $f\in\dom(\delta^2)\subseteq E_0$, i.e., $f\in \ker(\delta_0)$, and one can invoke Corollary 4.3.8 from \cite{EN00} to conclude
	\[ \ker(\delta)\subseteq \bigcap_{t\geq 0}\ker(\Id-T_0(t)), \]
	finishing the proof. 
\end{proof}
\begin{proof}
	[Proof of \cref{char2}] Let $\euT = (T(t))_{t \geq 0}$ be strongly continuous and locally equicontinuous semigroup generated by $\delta$. We first note that by \cref{lemunital} the operator $\delta$ is unital if and only if $T(t)$ is unital for every $t \geq 0$. 
	
	If (a) holds (and therefore (b) of \cref{char1}), we take $f,g \in \dom(\delta)$ and then obtain by \cref{lemdiff1} (i) that the map 
	\begin{align*}
		[0, \infty) \rightarrow \Cb(X), \quad t \mapsto T(t)(fg) 
	\end{align*}
	is differentiable in $0$ with derivative $f \delta(g) + \delta(f)g$. Thus, $fg \in \dom(\delta)$ with $\delta(fg) = f \delta(g) + \delta(f)g$.
	
	Suppose conversely that (b) holds and take $f,g \in \dom(\delta)$ as well as $s > 0$. By \cref{lemdiff1} (i) we obtain that 
	\begin{align*}
		h \colon [0,s] \rightarrow \Cb(X), \quad t \mapsto (T(t)f) \cdot (T(t)g) 
	\end{align*}
	is differentiable with derivative $(T(t)f) \cdot (\delta T(t)g) + (\delta T(t) f) \cdot (T(t)g)$ for $t \in [0,\infty)$. Applying \cref{prodrule} with $H(t) \defeq T(s-t)$ for $t \in [0,s]$ and (b) yield that 
	\begin{align*}
		H \cdot h\colon [0,s] \rightarrow \Cb(X), \quad t \mapsto T(s-t)((T(t)f)\cdot (T(t)g)) 
	\end{align*}
	is differentiable with derivative 
		\begin{align*}
			(H \kern-1pt\cdot\kern-1pt h)'(t) &=T(s-t)[(T(t)f) \kern-1pt\cdot\kern-1pt (\delta T(t)g) + (\delta T(t) f) \kern-1pt\cdot\kern-1pt (T(t)g)] - T(s-t)\delta ((T(t)f) \kern-1pt\cdot\kern-1pt (T(t)g))\\
			&= T(s-t)[(T(t)f) \kern-1pt\cdot\kern-1pt (\delta T(t)g) + (\delta  T(t)f) \kern-1pt\cdot\kern-1pt (T(t)g) - \delta ((T(t)f) \kern-1pt\cdot\kern-1pt (T(t)g))]\\
			&= 0
		\end{align*}
	for $t \in [0,s]$.
	Thus, 
	\begin{align*}
		T(s)(fg) = (H \cdot h)(0) = (H \cdot h)(s) = (T(s)f)(T(s)g) 
	\end{align*}
	which shows (a) by density of $\dom(\delta)$ and \cref{char1}.
	
	The equivalence of (a) and (c) immediately follows from \cref{char1} and \cref{kato}. Moreover, uniqueness of $\varphi$ follows from \cref{char1} and the fact that the semigroup is uniquely determined by its generator.
\end{proof}

\section{Attractors and stability: An outlook} Koopman semigroups have proven to be an effective tool to study dynamical systems on compact spaces. The characterization of these semigroups and their generators on spaces $\Cb(X)$ can be the starting point to extend the Koopman theory to a more general framework, thereby allowing for an approach to non-linear systems arising from partial differential equations based on \emph{linear} functional analysis and operator theory. One aspect that can be addressed is the study of attractors, playing a key role for dynamical systems and, in particular, for the solutions of partial differential equations (see, e.g., \cite[Chapter 1]{Tema1998}, \cite[Part III]{Robi2001}, \cite{GLMY2018} and \cite{GLMY2018b}). Various notions can be found in the literature and these can be translated to stability properties of the corresponding Koopman semigroup as discussed in \cite{Kueh2020} (and \cite{Kueh2019} for dynamics on compact spaces). The basis for this approach is provided by the following observation. Recall that a linear subspace $I \subseteq \Cb(X)$ is an \emph{ideal} if $fg \in I$ whenever $f \in \Cb(X)$ and $g \in I$. 
\begin{proposition}\label{correspondence} 
	The mapping 
	\begin{align*}
		M \mapsto I_M \defeq \{f \in \Cb(X)\mid f|_M = 0\} 
	\end{align*}
	establishes an inclusion reversing bijection between the set of all closed subsets of $X$ and the set of closed ideals in $\Cb(X)$. Moreover, if $\varphi = (\varphi_t)_{t \geq 0}$ is a continuous semiflow on $X$, the following assertions are equivalent for a closed subset $M \subseteq X$. 
	\begin{enumerate}
		[(a)] 
		\item $M$ is \emph{$\varphi$-invariant}, i.e., $\varphi_t(M) \subseteq M$ for every $t \geq 0$. 
		\item $I_M$ is \emph{$\euT_\varphi$-invariant}, i.e., $T_\varphi(t)I_M \subseteq I_M$ for every $t \geq 0$. 
	\end{enumerate}

\end{proposition}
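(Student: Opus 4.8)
The plan is to show that the stated map $M\mapsto I_M$ is a bijection by exhibiting its inverse, namely $I\mapsto V(I):=\bigcap_{f\in I}f^{-1}(\{0\})$ (the common zero set of $I$), and then to verify the equivalence (a)$\Leftrightarrow$(b) for a semiflow separately. Throughout, ``closed ideal'' is to be read as $\tau_{\mathrm m}$-closed ideal, in keeping with the paper's convention on $\Cb(X)$ (this matters: e.g.\ for $X=\N$ the \emph{norm}-closed ideal $c_0\subseteq\ell^\infty=\Cb(\N)$ is not of the form $I_M$).

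First I would record the routine parts. For closed $M\subseteq X$ the set $I_M=\bigcap_{x\in M}\ker(\mathrm{ev}_x)$ is a $\tau_{\mathrm m}$-closed linear subspace, because each evaluation $\mathrm{ev}_x\colon f\mapsto f(x)$ is $\tau_{\mathrm c}$- and hence $\tau_{\mathrm m}$-continuous; and it is an ideal since $(fg)|_M=f|_M\cdot g|_M$. Inclusion reversal is clear. That $V(I_M)=M$ for closed $M$ is precisely complete regularity of $X$: the inclusion $M\subseteq V(I_M)$ is trivial, and if $x\notin M$ one picks $f\in\Cb(X)$ with $f|_M=0$ and $f(x)=1$, so that $f\in I_M$ while $x\notin f^{-1}(\{0\})$. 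This alone gives injectivity of $M\mapsto I_M$.

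The substantial point is surjectivity, i.e.\ $I_{V(I)}=I$ for every $\tau_{\mathrm m}$-closed ideal $I$; as $I\subseteq I_{V(I)}$ is automatic, I must show $I_{V(I)}\subseteq I$. Write $M:=V(I)$ and fix $g\in I_M$, so $g|_M=0$. Since $I$ is $\tau_{\mathrm m}$-closed and both $g$ and all the approximants constructed below lie in the closed norm ball $B$ of radius $\|g\|_\infty$, on which $\tau_{\mathrm m}$ agrees with $\tau_{\mathrm c}$, it suffices to approximate $g$ in the compact-open topology by elements of $I$ of norm $\le\|g\|_\infty$. So fix a compact $L\subseteq X$ and $\varepsilon>0$. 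The set $L_\varepsilon:=\{x\in L:|g(x)|\ge\varepsilon\}$ is compact and, because $g|_M=0$, disjoint from $M$; hence each $x\in L_\varepsilon$ admits $f_x\in I$ with $f_x(x)\ne 0$. Covering $L_\varepsilon$ by finitely many of the open sets $\{|f_x|>0\}$ and summing the corresponding $f_{x_i}\overline{f_{x_i}}\in I$ yields $u\in I$ with $u\ge 0$ on $X$ and $c:=\min_{L_\varepsilon}u>0$ (the case $L_\varepsilon=\emptyset$ being trivial). Applying the bounded continuous functions $\psi_n\colon[0,\infty)\to[0,1]$, $\psi_n(t):=n/(1+nt)$, gives $\psi_n(u)\in\Cb(X)$ and hence $h_n:=u\cdot\psi_n(u)\in I$ with $0\le h_n\le 1$ everywhere and $h_n\ge nc/(1+nc)$ on $L_\varepsilon$. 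Then $gh_n\in I$, $\|gh_n\|_\infty\le\|g\|_\infty$, and $p_L(g-gh_n)=\sup_{x\in L}|g(x)|(1-h_n(x))\le\max\bigl(\varepsilon,\,\|g\|_\infty/(1+nc)\bigr)<2\varepsilon$ for $n$ large. Letting $L$ increase and $\varepsilon\to 0$ along a directed system shows $g\in\overline{I}^{\tau_{\mathrm m}}=I$. (Alternatively, one can appeal to ideal theory for commutative Saks-C$^*$-algebras, cf.\ \cite{Coop1978}.)

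Finally, for a continuous semiflow $\varphi$: if $\varphi_t(M)\subseteq M$ for all $t$ and $g\in I_M$, then $(T_\varphi(t)g)(x)=g(\varphi_t(x))=0$ for $x\in M$, so $T_\varphi(t)I_M\subseteq I_M$. Conversely, if (a) fails, there are $t_0\ge 0$ and $x_0\in M$ with $\varphi_{t_0}(x_0)\notin M$; complete regularity gives $g\in\Cb(X)$ with $g|_M=0$ and $g(\varphi_{t_0}(x_0))=1$, whence $g\in I_M$ but $(T_\varphi(t_0)g)(x_0)=1\ne 0$, so $T_\varphi(t_0)g\notin I_M$ and (b) fails. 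I expect the only genuine obstacle to be the approximation step in the proof of surjectivity---manufacturing elements of $I$ that behave like a partition of unity on compact pieces and then upgrading the resulting compact-open approximation to the mixed topology via norm-boundedness; everything else is a routine application of complete regularity.
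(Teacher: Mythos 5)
Your argument is correct, but it takes a different route from the paper: the paper does not prove the correspondence at all, it simply invokes \cite[Proposition 2.7]{Coop1978} for the bijection between closed subsets and $\tau_{\mathrm{m}}$-closed ideals and notes that the invariance equivalence follows from complete regularity as in \cite[Lemma 4.18]{EFHN2015}, omitting the details. You instead reprove the Cooper-type result from scratch: injectivity via $V(I_M)=M$ (complete regularity), and surjectivity by building, inside an arbitrary $\tau_{\mathrm{m}}$-closed ideal $I$, the approximate units $h_n=u\,\psi_n(u)$ from a finite sum $u=\sum_i f_{x_i}\overline{f_{x_i}}$, so that $gh_n\in I$ approximates $g\in I_{V(I)}$ in $\tau_{\mathrm{c}}$ within a fixed norm ball, and then using that $\tau_{\mathrm{m}}$ and $\tau_{\mathrm{c}}$ agree on norm-bounded sets to conclude $g\in I$. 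This buys a self-contained proof valid for arbitrary completely regular $X$ that uses only the paper's (one-sided, non-$*$) notion of ideal, and it makes explicit where $\tau_{\mathrm{m}}$-closedness (rather than norm-closedness) enters---your $c_0\subseteq\Cb(\N)$ example is exactly the right sanity check; the paper's citation buys brevity and a ready-made category-theoretic context. Your treatment of the invariance equivalence coincides with what the paper sketches. One cosmetic slip: $\psi_n(t)=n/(1+nt)$ has $\psi_n(0)=n$, so its range is $(0,n]$ rather than $[0,1]$; this is harmless, since all you use is that $\psi_n$ is bounded and continuous (so $\psi_n(u)\in\Cb(X)$ and $h_n\in I$) and that $0\le h_n\le 1$ with $h_n\ge nc/(1+nc)$ on $L_\varepsilon$, which remain true.
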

\begin{proof}
	The first part readily follows from \cite[Proposition 2.7]{Coop1978}. The second part is a simple application of complete regularity of $X$ analogous to the proof of \cite[Lemma 4.18]{EFHN2015}. We omit the details. 
\end{proof}
With this correspondence, many concepts of attractiveness for a set $M \subseteq X$ can be described by stability conditions for the semigroup $\euT_\varphi|_{I_M}$ of the restricted operators $T_\varphi(t)|_{I_M}$ for $t \geq 0$. Let $\calB\subseteq \pow(X)$ be a family of subsets of $X$. A compact invariant set $M \subseteq X$ is called \emph{$\calB$-attractive} if for every $B\in \calB$ and every open neighbourhood $U$ of $M$ there is $t_0\geq 0$ with $\varphi_t(B)\subseteq U$ for every $t \geq t_0$. If $(X,d)$ is a metric space, this is equivalent to the requirement $\lim_{t \rightarrow \infty} d(\varphi_t(B),M) = 0$ for each $B\in\calB$. It can be easily proven that a compact invariant set $M \subseteq X$ is $\calB$-attractive if and only if $\lim_{t \rightarrow \infty} T_\varphi(t)f = 0$ uniformly on each $B\in \calB$ for every $f \in I_M$. A simple example in the case of $\mathscr{B} = \{X\}$ is the following.

\begin{example}
	Let $X = [0,\infty]$ be the one-point compactification of $\R_{\geq 0}$ and $\varphi_t(x) = x+t$ for $x \in X$ and $t \geq 0$ (where $\infty + t = \infty)$. Then every subset $[x,\infty]$ for $x \in X$ is $\{X\}$-attractive. 
\end{example}

In particular, we observe that in this example there exists a smallest, $\calB$-attractive, compact, invariant set (namely $\{\infty\}$). The existence of such a \enquote{smallest attractor} is an interesting problem. For example, if $\calB$ is the collection of all bounded subsets of a Banach space $X$, then the existence of a smallest, $\calB$-attractive, compact, invariant set can be shown in many examples, e.g., for the semiflows arising from the two-dimensional Navier--Stokes equation (see, e.g., \cite[Theorems 12.3 and 12.5]{Robi2001}) or certain reaction-diffusion equations (see, e.g., \cite[Theorem 11.4]{Robi2001}). A sufficient criterion for existence which is often used in this context is the following: If there is a compact \emph{absorbing set} $A\subseteq X$, i.e., for every bounded set $B \subseteq X$ there is $t_0\geq 0$ with $\varphi_t(B) \subseteq A$ for every $t \geq t_0$, then there exists a smallest, boundedly-attractive, compact, invariant set. We give a short proof of a generalization of this existence theorem for a large class of collections $\calB\subseteq \pow(X)$ and completely regular spaces $X$ by using the Koopman linearization. 
\begin{theorem}
	Let $(X;\varphi)$ be a topological dynamical system and let $\calB\subseteq\pow(X)$ such that 
	\begin{itemize}
		\item $\bigcup\calB\neq\emptyset$, and 
		\item for every $B \in \calB$ and $t \geq 0$ there is $C \in \calB$ with $\varphi_t(B) \subseteq C$. 
	\end{itemize}
	Suppose that the non-empty compact set $A$ \emph{is $\calB$-absorbing}, i.e., has the property that for every $B\in \calB$ there is $t_0 \geq 0$ with $\varphi_t(B) \subseteq A$ for every $t \geq t_0$. Then there exists a smallest, non-empty, compact, $\calB$-attractive, invariant set $M$ and 
	\begin{align*}
		I_M = \{f \in \Cb(X)\mid \lim_{t \rightarrow \infty} T_\varphi(t)f = 0\quad\text{uniformly on each $B\in\calB$}\}. 
	\end{align*}
\end{theorem}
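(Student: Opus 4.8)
The plan is to construct $M$ through the Koopman linearization and the ideal/closed-set correspondence of \cref{correspondence}. Set
\[
  J \defeq \{f \in \Cb(X) \mid T_\varphi(t)f \to 0 \text{ uniformly on each } B \in \calB \text{ as } t \to \infty\}.
\]
First I would check that $J$ is a $\tau_{\mathrm{m}}$-closed, $\euT_\varphi$-invariant ideal of $\Cb(X)$. For each $B \in \calB$ the $\calB$-absorbing property of $A$ yields $t_B \geq 0$ with $\varphi_t(B) \subseteq A$ for $t \geq t_B$, so
\[
  q_B(f) \defeq \limsup_{t \to \infty}\, \sup_{x \in B} |f(\varphi_t(x))|
\]
is a seminorm on $\Cb(X)$ with $q_B \leq p_A$; hence each $q_B$ is $\tau_{\mathrm{c}}$-continuous and $J = \bigcap_{B \in \calB} q_B^{-1}(\{0\})$ is $\tau_{\mathrm{c}}$-closed, a fortiori $\tau_{\mathrm{m}}$-closed. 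The ideal property is immediate from $q_B(fg) \leq \|f\|_\infty q_B(g)$, and $\euT_\varphi$-invariance follows from the second hypothesis on $\calB$: if $g \in J$ and $s \geq 0$, choose $C \in \calB$ with $\varphi_s(B) \subseteq C$, so $\sup_{x \in B}|T_\varphi(t+s)g(x)| \leq \sup_{y \in C}|T_\varphi(t)g(y)| \to 0$. By \cref{correspondence} there is then a unique closed, $\varphi$-invariant set $M \subseteq X$ with $I_M = J$, which already gives the asserted description of $I_M$; it remains to identify $M$ as the smallest non-empty compact $\calB$-attractive invariant set.

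Next I would show $M$ is non-empty and compact. If $M = \emptyset$, then $I_M = \Cb(X)$, so $\mathbbm{1} \in J$; but $T_\varphi(t)\mathbbm{1} = \mathbbm{1}$ does not tend to $0$ uniformly on any non-empty set, while $\bigcup \calB \neq \emptyset$ supplies a non-empty $B \in \calB$ — a contradiction. For compactness, let $x \in X \setminus A$. Since $A$ is compact (hence closed) and $X$ completely regular, there is $f \in \Cb(X)$ with $f(x) = 1$ and $f|_A = 0$; because $\varphi_t(B) \subseteq A$ eventually for every $B \in \calB$, one has $f \in J = I_M$, hence $f|_M = 0$ and $x \notin M$. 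Thus $M \subseteq A$, and being closed in the compact set $A$ it is compact.

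For $\calB$-attractivity I would argue by contradiction: if it fails there are $B \in \calB$, an open $U \supseteq M$, a sequence $t_n \to \infty$ and $x_n \in B$ with $\varphi_{t_n}(x_n) \notin U$; for large $n$ these points lie in the compact set $A \setminus U$, so a subnet converges to some $y \in A \setminus U \subseteq X \setminus M$ along an index net still tending to infinity. As $M$ is closed and $X$ completely regular, there is $f \in I_M = J$ with $f(y) \neq 0$; but then $\sup_{x \in B}|f(\varphi_t(x))| \to 0$ while $f(\varphi_{t_\alpha}(x_\alpha)) \to f(y) \neq 0$ along the subnet, a contradiction. For minimality, let $N$ be any non-empty, compact, invariant, $\calB$-attractive set and take $f \in I_N$. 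For each $B \in \calB$ and $\varepsilon > 0$ the set $\{|f| < \varepsilon\}$ is an open neighbourhood of $N$, so $\varphi_t(B) \subseteq \{|f| < \varepsilon\}$ for all large $t$; thus $T_\varphi(t)f \to 0$ uniformly on $B$ and $f \in J = I_M$. Hence $I_N \subseteq I_M$, and the inclusion-reversing bijection of \cref{correspondence} gives $M \subseteq N$. Together with the uniqueness from \cref{correspondence} this finishes the proof.

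The step I expect to be the crux is the verification that $J$ is $\tau_{\mathrm{m}}$-closed, since this is exactly what makes \cref{correspondence} available and hence drives the whole argument; the estimate $q_B \leq p_A$ — which uses nothing but the compact absorbing set $A$ — is what makes this work cleanly. A secondary point requiring care is that, as $X$ need not be metrizable, the attractivity argument must be run with a convergent subnet of the sequence lying in the compact set $A \setminus U$ (whose index net still escapes to $\infty$) rather than with a subsequence.
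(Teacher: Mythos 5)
Your proposal is correct and takes essentially the same route as the paper: you introduce the same ideal, prove it is a closed, $\euT_\varphi$-invariant ideal by exploiting the absorbing set $A$ (your estimate $q_B \leq p_A$ is exactly the paper's $\varepsilon$-approximation in the seminorm $p_A$), and then invoke \cref{correspondence} together with $\mathbbm{1}\notin I$ (non-emptiness), $M\subseteq A$ (compactness) and $I_{\tilde M}\subseteq I_M$ for any attractive $\tilde M$ (minimality). The only deviation is that you verify the $\calB$-attractivity of $M$ explicitly via a subnet argument, a step the paper delegates to the earlier stated equivalence that a compact invariant set $M$ is $\calB$-attractive if and only if $T_\varphi(t)f\to 0$ uniformly on each $B\in\calB$ for every $f\in I_M$.
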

\begin{proof}
	It is readily checked that 
	\begin{align*}
		I \defeq \{f \in \Cb(X)\mid \lim_{t \rightarrow \infty} T_\varphi(t)f = 0\quad\text{uniformly on each $B\in\calB$}\} 
	\end{align*}
	is $\euT_\varphi$-invariant ideal. By $\bigcup\calB\neq\emptyset$, we have $\mathbbm{1}\not\in I$. We prove that $I$ is closed. Take $f \in \overline{I}$, $B \in\calB$ and $\varepsilon > 0$. We then find $t_0> 0$ with $\varphi_t(B) \subseteq A$ for every $t \geq t_0$ and $g \in I$ with $p_A(f-g) \leq \varepsilon$. Then 
	\begin{align*}
		\sup_{x \in B} |T_\varphi(t)f(x) - T_\varphi(t)g(x)|\leq \varepsilon 
	\end{align*}
	for every $t \geq t_0$. Since we find such a $g \in I$ for every $\varepsilon>0$, we obtain $f \in I$. By \cref{correspondence} we now find a closed invariant set $M \subseteq X$ with $I =I_M$; by the above, $M\neq\emptyset$. If $\tilde{M}$ is any $\calB$-attractive set, then $I_{\tilde{M}} \subseteq I_M$ and therefore $M \subseteq \tilde{M}$. In particular, $M \subseteq A$ which shows that $M$ is compact. 
\end{proof}
We remark that---by considering these general collections $\calB$---our result also covers many kinds attractors (e.g., uniform attraction of the whole space, compact sets, all points or only the orbit of a single point). Moreover, the generality of completely regular spaces also allows to consider weaker forms (e.g., by considering Banach spaces equipped with the weak topology). Finally, we remark that the result can be proved analogously, with the obvious adjustments, in the time-discrete case, i.e., for Koopman operators induced by a single continuous map $\psi \colon X \rightarrow X$.

A systematic approach to attractors on completely regular spaces using the Koopman linearization therefore seems fruitful. Other promising topics for the Koopman theoretic approach are Lyapunov functions and spectral theory, which the authors aim to investigate in future works. 


\begin{thebibliography}{10}
\expandafter\ifx\csname urlstyle\endcsname\relax
  \providecommand{\doi}[1]{(doi:\discretionary{}{}{}#1)}\else
  \providecommand{\doi}{(doi:\discretionary{}{}{}\begingroup
  \urlstyle{rm}\Url)}\fi

\bibitem{Koop1931}
Koopman BO. 1931 Hamiltonian systems and transformation in {H}ilbert space.
\newblock \emph{Proc. Natl. Acad. Sci. U.S.A.} \textbf{18}, 315--318.

\bibitem{EFHN2015}
Eisner T, Farkas B, Haase M, Nagel R. 2015 \emph{Operator Theoretic Aspects of
  Ergodic Theory}.
\newblock Springer.

\bibitem{PosOp1986}
Arendt W, Grabosch A, Greiner G, Moustakas U, Nagel R, Schlotterbeck U, Groh U,
  Lotz HP, Neubrander F. 1986 \emph{One-parameter Semigroups of Positive
  Operators}.
\newblock Springer.

\bibitem{Dixm1977}
Dixmier J. 1977 \emph{C$^*$-Algebras}.
\newblock North Holland.

\bibitem{Scha1974}
Schaefer HH. 1974 \emph{Banach Lattices and Positive Operators}.
\newblock Springer.

\bibitem{DoNe1993}
Dorroh JR, Neuberger JW. 1993 Lie generators for semigroups of transformations
  on a {P}olish space.
\newblock \emph{Electron. J. Differential Equations} \textbf{1993}, 1--7.

\bibitem{DoNe1996}
Dorroh JR, Neuberger JW. 1996 A theory of strongly continuous semigroups in
  terms of {L}ie generators.
\newblock \emph{J. Funct. Anal.} \textbf{136}, 114--126.
\newblock \doi{10.1006/jfan.1996.0023}

\bibitem{DoNe2000}
Dorroh JR, Neuberger JW. 2000 Linear extensions of nonlinear semigroups.
\newblock In \emph{Semigroups of Operators: Theory and Applications} (ed.
  AV~Balakrishnan), pp. 96--102. Birkh{\"a}user Basel.

\bibitem{Kueh2001}
Kühnemund F. 2001 A {H}ille-{Y}osida theorem for bi-continuous semigroups.
\newblock \emph{Semigroup Forum} \textbf{67}, 205--225.

\bibitem{Kueh2020}
Kühner V. 2020 \emph{Koopmanism for Attractors in Dynamical Systems}.
\newblock Ph.D. thesis, Tübingen University.

\bibitem{Miya1992}
Miyadera I. 1992 \emph{Nonlinear Semigroups}.
\newblock American Mathematical Society.

\bibitem{Tema1998}
Temam R. 1998 \emph{Infinite-Dimensional Systems in Mechanics and Physics}.
\newblock Springer, 2nd edition edition.

\bibitem{Robi2001}
Robinson JC. 2001 \emph{Infinite-Dimensional Systems. An Introduction to
  Dissipative Parabolic PDEs and the Theory of Global Attractors}.
\newblock Cambridge University Press.

\bibitem{GLMY2018}
Guo B, Ling L, Ma Y, Yang H. 2018 \emph{Infinite-Dimensional Dynamical Systems.
  Volume 1: Attractors and Inertial Manifolds}.
\newblock de Gruyter.

\bibitem{GoHe1955}
Gottschalk WH, Hedlund GA. 1955 \emph{Topological Dynamics}.
\newblock American Mathematical Society.

\bibitem{Ausl1988}
Auslander J. 1988 \emph{Minimal Flows and their Extensions}.
\newblock Elsevier.

\bibitem{Bron1979}
Bron\v{s}te\v{\i}n IU. 1979 \emph{Extensions of Minimal Transformation Groups}.
\newblock Sijthoff \& Noordhoff.

\bibitem{deVr1993}
de~Vries J. 1993 \emph{Elements of Topological Dynamics}.
\newblock Springer.

\bibitem{CranLig}
Crandall MG, Liggett TM. 1971 Generation of semi-groups of nonlinear
  transformations on general {B}anach spaces.
\newblock \emph{Amer. J. Math.} \textbf{93}, 265--298.
\newblock \doi{10.2307/2373376}

\bibitem{BCP}
Benilan P, Crandall MG, Pazy A \emph{Nonlinear Evolution Equations in {B}anach
  Spaces}.
\newblock Unpublished book manuscript

\bibitem{Barbu}
Barbu V. 1976 \emph{Nonlinear Semigroups and Differential Equations in {B}anach
  Spaces}.
\newblock Editura Academiei Republicii Socialiste Rom\^{a}nia, Bucharest;
  Noordhoff International Publishing, Leiden.
\newblock Translated from the Romanian

\bibitem{Pazy}
Pazy A. 1983 \emph{Semigroups of Linear Operators and Applications to Partial
  Differential Equations}, volume~44 of \emph{Applied Mathematical Sciences}.
\newblock Springer-Verlag, New York.
\newblock \doi{10.1007/978-1-4612-5561-1}

\bibitem{EN00}
Engel KJ, Nagel R. 2000 \emph{One-Parameter Semigroups for Linear Evolution
  Equations}.
\newblock Springer.

\bibitem{Jarc1981}
Jarchow H. 1981 \emph{Locally Convex Spaces}.
\newblock Teubner.

\bibitem{Scha1999}
Schaefer HH. 1999 \emph{Topological Vector Spaces}.
\newblock Springer, 2nd edition edition.

\bibitem{Wiweger}
Wiweger A. 1961 Linear spaces with mixed topology.
\newblock \emph{Studia Math.} \textbf{20}, 47--68.
\newblock \doi{10.4064/sm-20-1-47-68}

\bibitem{Wiweger2}
Wiweger A. 1957 A topologisation of {S}aks spaces.
\newblock \emph{Bull. Acad. Polon. Sci. Cl. III.} \textbf{5}, 773--777, LXVII.

\bibitem{Coop1978}
Cooper JB. 1978 \emph{Saks Spaces and Applications to Functional Analysis}.
\newblock North-Holland.

\bibitem{Summ1971}
Summers WH. 1971 The general complex bounded case of the strict weighted
  approximation problem.
\newblock \emph{Math. Ann.} \textbf{192}, 90--98.

\bibitem{Sentilles}
Sentilles FD. 1972 Bounded continuous functions on a completely regular space.
\newblock \emph{Trans. Amer. Math. Soc.} \textbf{168}, 311--336.
\newblock \doi{10.2307/1996178}

\bibitem{Dugu1978}
Dugundji J. 1978 \emph{Topology}.
\newblock Allyn and Bacon.

\bibitem{Komu1968}
Komura T. 1968 Semigroups of operators in locally convex spaces.
\newblock \emph{J. Funct. Anal.} \textbf{2}, 258--296.

\bibitem{Ouch1973}
Ouchi S. 1973 Semi-groups of operators in locally convex spaces.
\newblock \emph{J. Math. Soc. Japan} \textbf{25}, 265--276.

\bibitem{Demb1974}
Dembart B. 1974 On the theory of semigroups of operators on locally convex
  spaces.
\newblock \emph{J. Funct. Anal.} \textbf{16}, 123--160.

\bibitem{AlKu2002}
Albanese A, Kühnemund F. 2002 Trotter--{K}ato approximation theorems for
  locally equicontinuous semigroups.
\newblock \emph{Riv. Math. Univ. Parma (7)} \textbf{1}, 19--53.

\bibitem{Kraa2016}
Kraaij R. 2016 Strongly continuous and locally equi-continuous semigroups on
  locally convex spaces.
\newblock \emph{Semigroup Forum} \textbf{92}, 158--185.

\bibitem{KuPhD}
K{\"u}hnemund F. 2001 \emph{Bi-continuous Semigroups on Spaces with Two
  Topologies: {T}heory and Applications}.
\newblock Ph.D. thesis, Eberhard-Karls-Universit\"at T\"ubingen.

\bibitem{BuFa2}
Budde C, Farkas B. 2019 Intermediate and extrapolated spaces for bi-continuous
  operator semigroups.
\newblock \emph{Journal of Evolution Equations} \textbf{19}, 321--359.
\newblock \doi{10.1007/s00028-018-0477-8}

\bibitem{Fa09a}
Farkas B. 2011 Adjoint bi-continuous semigroups and semigroups on the space of
  measures.
\newblock \emph{Czech. J. Math.} \textbf{61}, 309--322.

\bibitem{Fa04a}
Farkas B. 2004 Perturbations of bi-continuous semigroups with applications to
  transition semigroups on {$C\sb b(H)$}.
\newblock \emph{Semigroup Forum} \textbf{68}, 87--107.

\bibitem{Aren1982}
Arendt W. 1982 Kato's equality and spectral decomposition for positive
  {${C_0}$}-groups.
\newblock \emph{Manuscripta Math.} \textbf{40}, 277 -- 298.

\bibitem{Kato1973}
Kato T. 1973 Schrödinger operators with singular potentials.
\newblock \emph{Israel J. Math.} \textbf{13}, 135--148.

\bibitem{ReSi1975}
Reed M, Simon B. 1975 \emph{Method's of Modern Mathematical Physics. Vol. II}.
\newblock Academic Press.

\bibitem{NaUh1981}
Nagel R, Uhlig H. 1981 An abstract {K}ato inequality for generators of positive
  operators semigroups on banach lattices.
\newblock \emph{J. Operator Theory} \textbf{6}, 113--123.

\bibitem{Aren1984}
Arendt W. 1984 Kato's inequality: a characterization of generators of positive
  semigroups.
\newblock \emph{Proc.R. Irish Acad. Sect. A} \textbf{84}, 155--175.

\bibitem{ArTe2019}
Arendt W, ter Elst AFM. 2019 Kato's inequality.
\newblock In \emph{Analysis and Operator Theory. {D}edicated in {M}emory of
  {T}osio {K}ato’s 100th Birthday}, pp. 47--60. Springer.

\bibitem{Koma1964}
Komatsu H. 1964 Semi-groups of operators in locally convex spaces.
\newblock \emph{J. Math. Soc. Japan} \textbf{16}, 230--262.

\bibitem{GLMY2018b}
Guo B, Ling L, Ma Y, Yang H. 2018 \emph{Infinite-Dimensional Dynamical Systems.
  Volume 2: Attractors and Methods}.
\newblock de Gruyter.

\bibitem{Kueh2019}
Kühner V. 2019 What can {K}oopmanism do for attractors in dynamical systems?
\newblock \emph{J. Anal.} pp. 1--23.
\newblock \doi{10.1007/s41478-019-00211-2}

\end{thebibliography}

\end{document}